\theoremstyle{plain}
\newtheorem{cor}{Corollary}[section]
\newtheorem{lem}{Lemma}[section]
\newtheorem{thm}{Theorem}[section]
\newtheorem{prop}{Proposition}[section]
\theoremstyle{definition}
\newcommand{\C}{\ensuremath{\mathbb{C}}}
\newcommand{\R}{\ensuremath{\mathbb{R}}}
\newcommand{\N}{\ensuremath{\mathbb{N}}}
\newcommand{\Ric}{\ensuremath{\mathrm{Ric}}}
\newtheorem{proposition}{Proposition}[section]
\newtheorem{remark}[proposition]{Remark}
\newcommand{\be}{\begin{equation}}
\newcommand{\ee}{\end{equation}}
\newcommand{\bea}{\begin{eqnarray}}
\newcommand{\eea}{\end{eqnarray}}
\newcommand{\bean}{\begin{eqnarray*}}
\newcommand{\eean}{\end{eqnarray*}}
\begin{document}

\title[Eigenvalue estimates for the Dirac operator on 
K\"ahler-Einstein manifolds]{Eigenvalue estimates for the Dirac operator
on K\"ahler-Einstein manifolds of even complex dimension}

\address{
{\normalfont\ttfamily 
Institute of Mathematics \newline
Humboldt-Universit\"at zu Berlin\newline
Office: Rudower Chaussee 25\newline
D-10099 Berlin, Germany\newline
kirchber@mathematik.hu-berlin.de}}
\author[K.-D. Kirchberg]{K.-D. Kirchberg}
\maketitle 

\begin{center}
(\today)
\end{center}

\begin{abstract}
\noindent In K\"ahler-Einstein case of positive scalar curvature and even complex dimension, an
improved lower bound for the first eigenvalue of the Dirac operator is given. It is shown by a general
construction that there are manifolds for which this new lower bound itself is the first eigenvalue.\\

\thispagestyle{empty}

\noindent 2002 Mathematics Subject Classification: 53C27, 58J50, 83C60 \\

\noindent Keywords: Dirac operator, eigenvalue, lower bound, K\"ahler-Einstein manifold

\end{abstract}
\bigskip

\noindent\section*{\bf Introduction}

For every eigenvalue $\lambda$ of the Dirac operator $D$ on a compact spin K\"ahler manifold
$M$ of positive scalar curvature $S$ and even (odd) complex dimension $m$, one knows the 
estimate
\begin{equation}\label{gl-01}
\lambda^2 \ge \frac{m}{4(m-1)} S_0 \quad \left(\lambda^2 \ge \frac{m+1}{4m} S_0 \right) \, , 
\end{equation}

where $S_0$ is the minimum of $S$ on $M$ \cite{3,4,5}. This estimate is sharps in the sense that
 there are manifolds for which (\ref{gl-01}) is an equality for the first eigenvalue of $D$. These
so called limiting manifolds were geometrically described by A.~Moroianu \cite{9,10}. For odd complex 
dimension $m$, the corresponding limiting manifolds are Einstein. However, for even $m \ge 4$, the
 limiting manifolds are not Einstein. This leads to the conjecture that the estimate (\ref{gl-01}) 
can be improved in K\"ahler-Einstein case of even complex dimension. In this paper we show that the 
conjecture is true. We obtain the estimate
\begin{equation}\label{gl-02}
 \lambda^2 \ge \frac{m+2}{4m} S
\end{equation}
which is better than (\ref{gl-01}) for even $m \ge 4$. We prove that this estimate is also sharp 
in the sense above. It is also known that the estimate (\ref{gl-01}) can be deduced from a more general
result which gives a lower bound of $\lambda^2$ if the eigenvalue $\lambda$ of $D$ is of determined type.
In the case considered here, we prove an analogous type depending estimate from which the estimate
 (\ref{gl-02}) follows immediately.
Our paper is organized as follows. In Section 1 we collect some basic notions and facts of
K\"ahlerian spin geometry. In particular, it is shown there that, for every eigenvalue $\lambda 
\not= 0$ of the Dirac operator $D$ on a K\"ahler manifold of complex dimension $m$, the corresponding
 eigenspace splits into $m$ subspaces. Using this fact the type of an eigenvalue $\lambda \not= 0$ 
is defined. 
In Section 1 the reader finds also some basic formulas which are used in the proofs of the estimates.
For the readers convenience, the up to now main results concerning the type depending lower bound for the
eigenvalues of the Dirac operator on compact K\"ahler manifolds of positive scalar curvature are shortly
written down in Section 2. In this section is also given a short proof of the basic type depending
estimate. So the reader can compare directly the known result and the corresponding method of the proof
with that in Section 3 which contains the main results of our paper. A special part of Section 3 
deals with the construction of limiting manifolds for the estimate (\ref{gl-02}). 

\section{Some basic notions and facts}

Let $M$ be a spin K\"ahler manifold of real dimension $n=2m$ with metric $g$, complex structure  $J$ 
and spinor bundle $\Sigma$. Then the corresponding K\"ahler form $\Omega$ defined by $\Omega (X,Y)
:= g(JX,Y)$ can be considered as an endomorphism of $\Sigma$ via Clifford multiplication. In this sense,
$\Omega$ acts on a spinor $\psi$ locally by
\begin{equation}\label{03}
\Omega \psi =\frac{1}{2} X_a \cdot J(X^a ) \cdot \psi \, ,
\end{equation}
where $(X_1, \ldots , X_n)$ is any local frame of vector fields and $(X^1, \ldots, X^n)$ the 
associated coframe  defined by $X^a := g^{ab} X_b , (g^{ab}):= ( g_{ab})^{-1}$ and
$g_{ab} := g(X_a , X_b)$. It follows that $\Omega$ then is antiselfadjoint with respect to the 
Hermitian scalar product $\langle \cdot , \cdot \rangle$ on $\Sigma$ and we have the well-known
 orthogonal splitting
\begin{equation}\label{gl-04}
\Sigma = \Sigma_0 \oplus \Sigma_1 \oplus \cdots \oplus \Sigma_m \, , 
\end{equation}
where $\Sigma_k$ denotes the eigensubbundle corresponding  to the eigenvalue \, $i (2k-m)$ of $\Omega$
with $\mbox{rank}_{\C} (\Sigma_k)= \left( \begin{array}{c}
 m \\ k \end{array} \right)$. 
For any real vector or vector field $X$ on $M$, we use the notations $p(X) := \frac{1}{2} (X- i
JX), \bar{p} (X):= \frac{1}{2} (X + iJX)$. Then we have the complex Clifford relations
\begin{equation} \label{gl-05}
\begin{array}{c}
p (X) \cdot \bar{p} (Y) + \bar{p} (Y) \cdot p (X) = - g (X,Y) + i \Omega (X,Y) \, , \\
\bar{p} (X) \cdot {p} (Y) + {p} (Y) \cdot \bar{p} (X) = - g (X,Y) - i \Omega (X,Y) \, , 
\end{array}
\end{equation}
\begin{equation} \label{gl-06}
\begin{array}{c}
p(X) \cdot p(Y) + p(Y) \cdot p(X) =0 \, , \\
\bar{p} (X) \cdot \bar{p} (Y) + \bar{p} (Y) \cdot \bar{p} (X)=0 \ .
\end{array}
\end{equation}
The proof of the following lemma is a simple calculation if one uses a local eigenframe of the 
endomorphism.

\begin{lem} \label{lem-1-1}
Suppose that $\alpha : TM \rightarrow TM$ is a selfadjoint endomorphism which commutes with the 
complex structure $J$. Then, for any local frame of vector fields $(X_1, \ldots , X_n)$,
the equations
\begin{equation} \label{gl-07}
\begin{array}{l}
p (\alpha (X_a)) \cdot \bar{p} (X^a)= p(X_a) \cdot \bar{p} (\alpha (X^a)) = - \frac{1}{2} \mathrm{tr}
(\alpha ) + \frac{i}{2} \alpha (X_a) \cdot J(X^a) \, , \\[0.5em]
\bar{p} (\alpha (X_a)) \cdot {p} (X^a)= \bar{p}(X_a) \cdot {p} (\alpha (X^a)) = - \frac{1}{2} \mathrm{tr}
(\alpha ) - \frac{i}{2} \alpha (X_a) \cdot J(X^a) 
\end{array}
\end{equation}
are valid.
\end{lem}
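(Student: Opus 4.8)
The plan is to verify the two displayed identities in \eqref{gl-07} by a direct local computation, choosing the local frame $(X_1,\dots,X_n)$ adapted to the endomorphism $\alpha$, as the sentence preceding the lemma suggests. Since $\alpha$ is selfadjoint and commutes with $J$, it is in particular a Hermitian endomorphism of the complex vector bundle $(TM,J)$, so at each point one can pick a $J$-adapted orthonormal eigenframe: vectors $e_1,\dots,e_m,Je_1,\dots,Je_m$ with $\alpha(e_j)=\mu_j e_j$ and hence $\alpha(Je_j)=\mu_j Je_j$, where the $\mu_j$ are the (real) eigenvalues. With such a frame $g_{ab}=\delta_{ab}$, so $X^a=X_a$, and $\tr(\alpha)=2\sum_j \mu_j$.

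First I would expand both sides of the first identity in this eigenframe. On the left, $p(\alpha(X_a))\cdot\bar p(X^a)=\sum_a p(\alpha(X_a))\cdot\bar p(X_a)$ splits into the $e_j$ and $Je_j$ contributions; using $\alpha(e_j)=\mu_j e_j$ and $\bar p(Je_j)=\tfrac12(Je_j - i e_j)= i\,\bar p(e_j)$ one pulls the eigenvalues out and rewrites everything in terms of $p(e_j)\cdot\bar p(e_j)$ and $p(e_j)\cdot\bar p(Je_j)$. The same manipulation with $\alpha$ moved onto the second factor shows the first two expressions in \eqref{gl-07} agree termwise. For the right-hand side, I would apply the complex Clifford relations \eqref{gl-05}: the symmetric part of $p(X_a)\cdot\bar p(X^a)$ contributes $-\tfrac12 g(X_a,X^a)+\tfrac i2\Omega(X_a,X^a)$ — and since $\Omega$ is antisymmetric the $\Omega$ term from the plain $\sum_a p(X_a)\cdot\bar p(X^a)$ would vanish, but with the eigenvalue weights $\mu_j$ inserted the surviving antisymmetric piece reassembles precisely into $\tfrac i2\,\alpha(X_a)\cdot J(X^a)$, while the symmetric piece gives $-\tfrac12\tr(\alpha)$. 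Concretely, writing $\alpha(X_a)\cdot J(X^a)=\sum_j\mu_j(e_j\cdot Je_j+Je_j\cdot J^2 e_j)=2\sum_j\mu_j\,e_j\cdot Je_j$ and comparing with the cross terms $p(e_j)\cdot\bar p(Je_j)+p(Je_j)\cdot\bar p(e_j)$ closes the first identity. The second identity in \eqref{gl-07} follows by the identical argument starting from the second relation in \eqref{gl-05}, or simply by taking the Clifford-algebra adjoint of the first (Clifford multiplication by a real vector is antiselfadjoint, $p(X)^{*}=-\bar p(X)$), which exchanges $p\leftrightarrow -\bar p$ and flips the sign of $i$.

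I do not expect a serious obstacle here; the only thing to be careful about is bookkeeping — correctly tracking the factors of $i$ produced by $\bar p(Je_j)=i\,\bar p(e_j)$ and $p(Je_j)=-i\,p(e_j)$, and making sure the frame-independence of the final expressions is respected (the statement is for an arbitrary local frame, so after establishing it in the eigenframe one notes that both sides of each equation in \eqref{gl-07} are tensorial in the frame, being built from Clifford products contracted against the metric, hence the identity in one frame gives it in all). A mild subtlety is that a smooth global $J$-adapted eigenframe need not exist where eigenvalues of $\alpha$ collide, but since \eqref{gl-07} is a pointwise identity it suffices to verify it at each point separately, where such a frame always exists.
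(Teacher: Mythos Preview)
Your approach is exactly what the paper intends: it states only that ``the proof \ldots\ is a simple calculation if one uses a local eigenframe of the endomorphism,'' and your $J$-adapted orthonormal eigenframe together with the frame-independence remark carries this out. One bookkeeping slip: in fact $p(Je_j)=+i\,p(e_j)$ and $\bar p(Je_j)=-i\,\bar p(e_j)$ (the opposite signs to what you wrote), though fortunately these errors cancel in the product $p(Je_j)\cdot\bar p(Je_j)$ so the computation still goes through.
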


The Ricci form $\rho$ is defined by $\rho (X,Y) := g(J(X) , \Ric (Y))$.
Considered as an endomorphism of $\Sigma$ the action of $\rho$ on spinors is locally given by
\begin{equation}\label{gl-08}
\rho \psi = \frac{1}{2} \Ric (X_a) \cdot J(X^a) \cdot \psi \, .
\end{equation}

The application of Lemma \ref{lem-1-1} to the special cases $\alpha = \mathrm{id}$ and $\alpha =
\Ric$ yields the identities
\begin{equation} \label{gl-09}
\begin{array}{l}
p(X_a) \cdot \bar{p} (X^a) = - m + i \Omega \, , \\
\bar{p} (X_a) \cdot p(X^a)= - m - i \Omega \, , 
\end{array}
\end{equation}
\begin{equation} \label{gl-10}
\begin{array}{l}
p (\Ric (X_a)) \cdot \bar{p} (X^a)=p (X_a) \cdot \bar{p} (\Ric (X^a))= - \frac{S}{2} + i \rho \, , \\[0.5em]
\bar{p} (\Ric (X_a)) \cdot p(X^a) = \bar{p} (X_a) \cdot p (\Ric (X^a)) = - \frac{S}{2} - i \rho \, .
\end{array}
\end{equation}
We consider the operators $D, D_+, D_- : \Gamma (\Sigma) \to \Gamma (\Sigma)$ locally defined by 
$D \psi := X^a \cdot  \nabla_{X_a} \psi, D_+ \psi =p (X^a) \cdot \nabla_{X_a} \psi, D_-
\psi := \bar{p} (X^a) \cdot \nabla_{X_a} \psi$. Then $D$ is the Dirac operator and we have the 
well-known operator identities
\begin{equation}\label{gl-11}
D=D_+ + D_- \, , 
\end{equation}
\begin{equation} \label{gl-12}
D_+^2 =0 \quad , \quad D_-^2 =0 \, , 
\end{equation}
\begin{equation}\label{gl-13}
D^2 =D_+ \circ D_- + D_- \circ D_+ \, . 
\end{equation}
Moreover, for all $k \in \{ 0,1, \ldots , m\}$, it holds that 
\begin{equation}\label{gl-14}
D_+ (\Gamma (\Sigma_k)) \subseteq \Gamma (\Sigma_{k+1}) \quad , \quad D_- (\Gamma (\Sigma_k)) \subseteq
\Gamma (\Sigma_{k-1}) \, , 
\end{equation}
where we here and in the following use the convention that $\Sigma_k \subset \Sigma$ is the zero 
subbundle if $k \not\in \{0,1, \ldots , m \}$.
We remember that $\Sigma$ is furnished with a canonical antilinear structure $j : \Sigma \to \Sigma$.
$j$ is parallel, commutes with the Clifford multiplication by real vectors and has further the 
properties
\begin{equation}\label{gl-15}
\langle j \varphi , j \psi \rangle = \langle \psi, \varphi \rangle \, , 
\end{equation}
\begin{equation}\label{gl-16}
j(\Sigma_k)= \Sigma_{m-k} \quad (k= 0,1, \ldots , m) \, , 
\end{equation}
\begin{equation}\label{gl-17}
j^2 =(-1)^{\frac{m(m+1)}{2}} \, . 
\end{equation}
Moreover, the relations
\begin{equation}\label{gl-18}
D \circ j = j \circ D \quad , \quad D_{\pm} \circ j = j \circ D_{\mp}
\end{equation}
are valid.
Let $\lambda$ be an eigenvalue of $D$ and $E^{\lambda} (D)$ the corresponding eigenspace. According to 
(\ref{gl-04}) every eigenspinor $\psi \in E^{\lambda} (D)$ decomposes in the form
$\psi = \psi_0 + \psi_1 + \cdots + \psi_m$ with $\psi_k \in \Gamma (\Sigma_k)$ for all 
$k\in \{ 0,1, \ldots , m \}$. By (\ref{gl-11}) and (\ref{gl-14}), then the eigenvalue equation $D{\psi} = \lambda
\psi$ is equivalent to the system of equations
\begin{equation}\label{gl-19}
D_+ \psi_{k-1} + D_- \psi_{k+1} \ = \ \lambda \psi_k \quad (k= 0,1, \ldots , m) \, . 
\end{equation}
For $k\in \{ 1,2, \ldots , m \}$ and $\psi \in E^{\lambda} (D)$, we consider the spinor
\begin{displaymath}
e^{\lambda}_k \psi := \frac{1}{\lambda} (D_- \psi_k + D_+ \psi_{k-1}) \ . 
\end{displaymath}
\begin{lem}\label{lem-1-2}
It holds that
\begin{equation}\label{gl-20}
D e^{\lambda}_k \psi \ = \ \lambda e^{\lambda}_k \psi \quad (k= 1,2, \ldots , m) \ . 
\end{equation}
\end{lem}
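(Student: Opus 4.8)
The plan is to verify directly that $e^\lambda_k\psi$ satisfies the first-order system \eqref{gl-19}, since by \eqref{gl-11} and \eqref{gl-14} that system is equivalent to the eigenvalue equation $D(e^\lambda_k\psi)=\lambda\, e^\lambda_k\psi$. First I would observe that $e^\lambda_k\psi$ has only two nonzero components: since $\psi_k\in\Gamma(\Sigma_k)$ and $\psi_{k-1}\in\Gamma(\Sigma_{k-1})$, formula \eqref{gl-14} gives $D_-\psi_k\in\Gamma(\Sigma_{k-1})$ and $D_+\psi_{k-1}\in\Gamma(\Sigma_k)$, so writing $e^\lambda_k\psi=\sum_j(e^\lambda_k\psi)_j$ we have $(e^\lambda_k\psi)_{k-1}=\tfrac1\lambda D_-\psi_k$, $(e^\lambda_k\psi)_k=\tfrac1\lambda D_+\psi_{k-1}$, and all other components vanish. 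Thus the system \eqref{gl-19} for $e^\lambda_k\psi$ reduces to checking just a handful of equations: the ones indexed by $k-2,\ k-1,\ k,\ k+1$ (all others being $0=0$ trivially).

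Next I would compute each of these nontrivially. The equation indexed by $k-2$ reads $D_-(e^\lambda_k\psi)_{k-1}=0$, i.e. $D_-^2\psi_k=0$, which is exactly \eqref{gl-12}. Likewise the equation indexed by $k+1$ reads $D_+(e^\lambda_k\psi)_k=0$, i.e. $D_+^2\psi_{k-1}=0$, again \eqref{gl-12}. The two remaining equations, indexed by $k-1$ and $k$, are the substantive ones; after multiplying through by $\lambda$ they become
\begin{equation*}
D_+D_-\psi_k + D_-D_+\psi_{k-1} \;=\; \lambda\, D_-\psi_k\,,\qquad
D_-D_+\psi_{k-1} + D_+D_-\psi_k \;=\; \lambda\, D_+\psi_{k-1}\,.
\end{equation*}
These are not separately obvious, but they follow by applying $D_\mp$ to the original system \eqref{gl-19} for $\psi$: applying $D_-$ to the equation $D_+\psi_{k-1}+D_-\psi_{k+1}=\lambda\psi_k$ and using $D_-^2=0$ gives $D_-D_+\psi_{k-1}=\lambda D_-\psi_k$; applying $D_+$ to the equation $D_+\psi_{k-1}+D_-\psi_{k+1}=\lambda\psi_k$ and using $D_+^2=0$ gives $D_+D_-\psi_{k+1}=\lambda D_+\psi_k$, and shifting the index down by one yields $D_+D_-\psi_k=\lambda D_+\psi_{k-1}$. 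Substituting these two identities into the left-hand sides above immediately produces the desired equalities; indeed each displayed equation becomes $\lambda D_+\psi_{k-1}+\lambda D_-\psi_k=\lambda(\ldots)$ after moving the terms correctly, so one should be slightly careful to match indices, but no computation beyond bookkeeping is needed.

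The only mild obstacle is this index bookkeeping — making sure that the shifted identity $D_+D_-\psi_k=\lambda D_+\psi_{k-1}$ is legitimate, which it is because \eqref{gl-19} holds for \emph{all} $k\in\{0,\ldots,m\}$ with the zero-subbundle convention, so applying $D_+$ to the $(k-1)$-st equation is valid even at the endpoints. Once the two key identities $D_-D_+\psi_{k-1}=\lambda D_-\psi_k$ and $D_+D_-\psi_k=\lambda D_+\psi_{k-1}$ are in hand, adding them gives $D^2(e^\lambda_k\psi)\cdot\lambda = \ldots$ directly via \eqref{gl-13}, but the cleaner route is simply to confirm the four component equations of \eqref{gl-19} as above and conclude $D(e^\lambda_k\psi)=\lambda e^\lambda_k\psi$.
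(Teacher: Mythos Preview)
Your approach is essentially the paper's: both arguments rest on the identities $D_-D_+\psi_{k-1}=\lambda D_-\psi_k$ and $D_+D_-\psi_k=\lambda D_+\psi_{k-1}$, obtained by applying $D_\mp$ to the system \eqref{gl-19} and using $D_\pm^2=0$; the paper then computes $De^\lambda_k\psi$ directly, whereas you repackage the same computation as a component-by-component verification of \eqref{gl-19}.

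There is, however, a bookkeeping slip in your displayed equations. The component equations of \eqref{gl-19} for $e^\lambda_k\psi$ at indices $k-1$ and $k$ each have only \emph{one} nonzero term on the left, not two: at index $k-1$ the equation is $D_-\bigl((e^\lambda_k\psi)_k\bigr)=\lambda (e^\lambda_k\psi)_{k-1}$, which after multiplying by $\lambda$ gives $D_-D_+\psi_{k-1}=\lambda D_-\psi_k$; at index $k$ it is $D_+\bigl((e^\lambda_k\psi)_{k-1}\bigr)=\lambda (e^\lambda_k\psi)_k$, giving $D_+D_-\psi_k=\lambda D_+\psi_{k-1}$. As you wrote them, the two displayed equations have identical left-hand sides but different right-hand sides, which would force $D_-\psi_k=D_+\psi_{k-1}$ --- impossible, since these sections live in $\Gamma(\Sigma_{k-1})$ and $\Gamma(\Sigma_k)$ respectively. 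Once this is corrected, the two identities you subsequently derive are \emph{exactly} the equations to be verified, and the proof is complete without further substitution.
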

\begin{proof}
If we apply $D=D_+ + D_-$ to the equations (\ref{gl-19}), then we obtain the system
\begin{displaymath}
D_- D_+ \psi_{k-1} + D_+ D_- \psi_{k+1} = \lambda \cdot D_- \psi_k + \lambda \cdot D_+ \psi_k
\quad (k=0,1, \ldots, m) \ ,
\end{displaymath}
which is equivalent to
\begin{equation} \label{gl-21}
\left.
\begin{array}{l}
D_- D_+ \psi_{k-1} = \lambda D_- \psi_k  \\[0.5em]
D_+ D_- \psi_{k+1} = \lambda D_+ \psi_k 
\end{array} \right\} \quad (k=0,1, \ldots m ) \ . 
\end{equation}
It follows
\begin{eqnarray*}
D e^{\lambda}_k \psi & \stackrel{(\ref{gl-11}), (\ref{gl-12})}{=} & \frac{1}{\lambda} (D_+ D_- \psi_k + D_- D_+
\psi_{k-1} ) \stackrel{(\ref{gl-21})}{=} \frac{1}{\lambda} ( \lambda \cdot D_+ \psi_{k-1} +
\lambda D_- \psi_k) \\[0.5em]
&=& \lambda e^{\lambda}_k \psi  \ . 
\end{eqnarray*}
\end{proof}
By Lemma \ref{lem-1-2}, we obtain endomorphisms
\begin{displaymath}
e^{\lambda}_k : E^{\lambda} (D)  \to E^{\lambda} (D) \quad (k=1,2, \ldots , m)
\end{displaymath}
for every eigenvalue $\lambda \not= 0$. The next lemma one proves by similar calculations.
\begin{lem}\label{lem-1-3}
For every eigenvalue $\lambda \not= 0$ of $D$, the endomorphisms $e^{\lambda}_k \ 
(k=1,2, \ldots , m)$ have the following properties:\\
\begin{itemize}
\item[(i)] \quad $(e^{\lambda}_k )^2 = e^{\lambda}_k \quad (k=1,2, \ldots , m) \ , $\\
\item[(ii)] \quad $e^{\lambda}_k \circ e^{\lambda}_l =0 \quad (k \not= l)$ \ , \\
\item[(iii)] \quad $e^{\lambda}_1 + e^{\lambda}_2 + \cdots + e^{\lambda}_m = \mathrm{id} \ . $
\end{itemize}
\end{lem}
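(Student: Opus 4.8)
The plan is to exploit the degree bookkeeping forced by the splitting $(\ref{gl-04})$ together with the operator identities $(\ref{gl-12})$, $(\ref{gl-14})$, $(\ref{gl-19})$ and $(\ref{gl-21})$; no genuinely hard step is involved, only careful attention to index ranges. Fix $\lambda \not= 0$, take $\psi \in E^{\lambda}(D)$ and write $\psi = \psi_0 + \cdots + \psi_m$ with $\psi_k \in \Gamma(\Sigma_k)$. Since $D_- \psi_k \in \Gamma(\Sigma_{k-1})$ and $D_+ \psi_{k-1} \in \Gamma(\Sigma_k)$ by $(\ref{gl-14})$, the $\Sigma_j$-components of $e^{\lambda}_k \psi$ all vanish apart from
\begin{displaymath}
(e^{\lambda}_k \psi)_{k-1} = \frac{1}{\lambda} D_- \psi_k \, , \qquad (e^{\lambda}_k \psi)_{k} = \frac{1}{\lambda} D_+ \psi_{k-1} \, .
\end{displaymath}
This single observation drives all three assertions.

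For $(iii)$ I would collect, for a fixed $j$, the $\Sigma_j$-component of $\sum_{k=1}^m e^{\lambda}_k \psi$: the index $k = j+1$ contributes $\frac{1}{\lambda} D_- \psi_{j+1}$ and the index $k = j$ contributes $\frac{1}{\lambda} D_+ \psi_{j-1}$, provided these values lie in $\{1, \ldots, m\}$. For $1 \le j \le m-1$ both do, and the sum equals $\psi_j$ by $(\ref{gl-19})$. For $j = 0$ only $k = 1$ survives, giving $\frac{1}{\lambda} D_- \psi_1 = \psi_0$ by $(\ref{gl-19})$ with $k=0$ and the convention $\psi_{-1} = 0$; symmetrically, for $j = m$ only $k = m$ survives, giving $\frac{1}{\lambda} D_+ \psi_{m-1} = \psi_m$. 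Hence $\sum_{k=1}^m e^{\lambda}_k \psi = \psi$.

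For $(ii)$, fix $k \not= l$ and put $\varphi := e^{\lambda}_l \psi$, so that $\varphi_{l-1} = \frac{1}{\lambda} D_- \psi_l$, $\varphi_l = \frac{1}{\lambda} D_+ \psi_{l-1}$ and $\varphi_j = 0$ for $j \notin \{l-1, l\}$. Applying the displayed formula to $\varphi$ in place of $\psi$, $e^{\lambda}_k \varphi = \frac{1}{\lambda}(D_- \varphi_k + D_+ \varphi_{k-1})$ can be non-zero only when $\varphi_k \not= 0$ or $\varphi_{k-1} \not= 0$, i.e. only when $k \in \{l-1, l+1\}$. If $k = l+1$ then $e^{\lambda}_{l+1} \varphi = \frac{1}{\lambda} D_+ \varphi_l = \frac{1}{\lambda^2} D_+ D_+ \psi_{l-1} = 0$ by $(\ref{gl-12})$; if $k = l-1$ then $e^{\lambda}_{l-1} \varphi = \frac{1}{\lambda} D_- \varphi_{l-1} = \frac{1}{\lambda^2} D_- D_- \psi_l = 0$ by $(\ref{gl-12})$. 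Thus $e^{\lambda}_k \circ e^{\lambda}_l = 0$.

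Finally $(i)$ follows formally from $(ii)$ and $(iii)$: $e^{\lambda}_k = e^{\lambda}_k \circ \mathrm{id} = e^{\lambda}_k \circ \big( \sum_{l=1}^m e^{\lambda}_l \big) = (e^{\lambda}_k)^2$. (Alternatively one can verify $(e^{\lambda}_k)^2 = e^{\lambda}_k$ directly by substituting the displayed components of $e^{\lambda}_k \psi$ back into $e^{\lambda}_k$ and simplifying $D_- D_+ \psi_{k-1}$ and $D_+ D_- \psi_k$ via $(\ref{gl-21})$.) The only point demanding real care is the boundary bookkeeping at $j = 0$ and $j = m$ in part $(iii)$, where one must invoke the convention that $\Sigma_k$ is the zero bundle for $k \notin \{0, 1, \ldots, m\}$.
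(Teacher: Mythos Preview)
Your argument is correct and is precisely the kind of direct computation the paper intends when it says the lemma ``one proves by similar calculations'': you read off the $\Sigma_j$-components of $e^{\lambda}_k\psi$ from $(\ref{gl-14})$, verify $(iii)$ via the eigenvalue system $(\ref{gl-19})$, kill the cross-terms in $(ii)$ using $D_\pm^2=0$, and deduce $(i)$ formally. The only cosmetic remark is that in your justification of $(ii)$ the phrase ``i.e.\ only when $k\in\{l-1,l+1\}$'' tacitly uses the standing hypothesis $k\neq l$ to discard the case $k=l$; this is of course fine, but worth making explicit.
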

This lemma implies the following proposition immediately.
\begin{prop}\label{prop-1-4}
Let $\lambda \not= 0$ be any eigenvalue of the Dirac operator $D$ on a K\"ahler manifold $M$ of complex
dimension $m$. Then the corresponding eigenspace $E^{\lambda} (D)$ splits into $m$ subspaces
\begin{equation}\label{gl-22}
E^{\lambda} (D) = E^{\lambda}_1 (D) \oplus E^{\lambda}_2 (D) \oplus \cdots \oplus
E^{\lambda}_m (D) \ , 
\end{equation}
where the subspace $E^{\lambda}_k (D):= e^{\lambda}_k (E^{\lambda} (D))$ is characterized as follows:
Every eigenspinor $\psi \in E^{\lambda}_k (D)$ is of the form $\psi = \psi_{k-1} + \psi_k$ such 
that the components $\psi_{k-1} \in \Gamma (\Sigma_{k-1})$ and $\psi_k \in \Gamma (\Sigma_k)$ 
satisfy the equations
\begin{equation}\label{gl-23}
D_+ \psi_{k-1} = \lambda \psi_k \quad , \quad D_- \psi_k = \lambda \psi_{k-1} \ , 
\end{equation}
\begin{equation}\label{gl-24}
D_- \psi_{k-1} = 0 \quad , \quad D_+ \psi_k = 0 \ , 
\end{equation}
\begin{equation}\label{gl-25}
D^2 \psi_{k-1} = \lambda^2 \psi_{k-1} \quad , \quad D^2 \psi_k = \lambda^2 \psi_k
\  .
\end{equation}
Moreover, the subspaces $E^{\lambda}_k (D)$ are related by
\begin{equation}\label{gl-26}
j(E^{\lambda}_k (D)) = E^{\lambda}_{m-k+1} (D) \quad (k=1,2, \ldots , m) \ . 
\end{equation}
\end{prop}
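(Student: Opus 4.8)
The plan is to read the entire proposition off Lemma \ref{lem-1-3}, which states precisely that the operators $e_1^\lambda,\dots,e_m^\lambda$ form a complete system of mutually orthogonal idempotents on the finite-dimensional space $E^\lambda(D)$. First I would invoke the standard linear-algebra fact: properties (i)--(iii) of Lemma \ref{lem-1-3} imply that $E^\lambda(D)$ is the internal direct sum of the subspaces $E_k^\lambda(D):=e_k^\lambda(E^\lambda(D))=\{\psi\in E^\lambda(D):e_k^\lambda\psi=\psi\}$, the projection onto $E_k^\lambda(D)$ along the others being $e_k^\lambda$ itself. This is exactly (\ref{gl-22}), and it remains only to identify the spinors sitting in $E_k^\lambda(D)$.

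Next I would describe the shape of an element $\psi\in E_k^\lambda(D)$. Writing $\psi=e_k^\lambda\chi=\frac1\lambda(D_-\chi_k+D_+\chi_{k-1})$ for some $\chi\in E^\lambda(D)$ and using (\ref{gl-14}), the summand $D_-\chi_k$ lies in $\Gamma(\Sigma_{k-1})$ and $D_+\chi_{k-1}$ in $\Gamma(\Sigma_k)$; hence $\psi=\psi_{k-1}+\psi_k$ with $\psi_j=0$ for $j\notin\{k-1,k\}$, the convention $\Sigma_j=0$ for $j\notin\{0,\dots,m\}$ taking care of the extreme values $k=1$ and $k=m$. Since $\psi$ is itself an eigenspinor it satisfies the system (\ref{gl-19}); evaluating (\ref{gl-19}) at the indices $k-1$ and $k$ and using $\psi_{k-2}=\psi_{k+1}=0$ yields (\ref{gl-23}), while evaluating it at the indices $k-2$ and $k+1$, where the right-hand side vanishes, yields (\ref{gl-24}). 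Finally (\ref{gl-25}) follows by combining (\ref{gl-11})--(\ref{gl-13}): $D^2\psi_{k-1}=(D_+D_-+D_-D_+)\psi_{k-1}=D_-D_+\psi_{k-1}=\lambda D_-\psi_k=\lambda^2\psi_{k-1}$ by (\ref{gl-24}) and (\ref{gl-23}), and symmetrically for $\psi_k$. Conversely, if $\psi\in E^\lambda(D)$ has support only in $\Sigma_{k-1}\oplus\Sigma_k$, the same four evaluations of (\ref{gl-19}) force (\ref{gl-23})--(\ref{gl-24}), and then a direct computation gives $e_l^\lambda\psi=0$ for $l\ne k$ and $e_k^\lambda\psi=\psi$, so $\psi\in E_k^\lambda(D)$; thus membership in $E_k^\lambda(D)$ is equivalent to having support in $\Sigma_{k-1}\oplus\Sigma_k$, and the stated conditions genuinely characterize $E_k^\lambda(D)$.

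For (\ref{gl-26}) I would use the canonical antilinear map $j$. By (\ref{gl-18}) it commutes with $D$, hence preserves $E^\lambda(D)$, and by (\ref{gl-17}) it is invertible. Given $\psi=\psi_{k-1}+\psi_k\in E_k^\lambda(D)$ and setting $l=m-k+1$, (\ref{gl-16}) shows $j\psi$ has components $(j\psi)_{l-1}=j\psi_k\in\Gamma(\Sigma_{m-k})=\Gamma(\Sigma_{l-1})$ and $(j\psi)_l=j\psi_{k-1}\in\Gamma(\Sigma_{l})$ and no others; since $j\psi\in E^\lambda(D)$ has support in $\Sigma_{l-1}\oplus\Sigma_l$, the converse just proved gives $j\psi\in E_l^\lambda(D)=E_{m-k+1}^\lambda(D)$ (one may also verify $D_+(j\psi)_{l-1}=\lambda(j\psi)_l$ and $D_-(j\psi)_l=\lambda(j\psi)_{l-1}$ directly from $D_\pm\circ j=j\circ D_\mp$ and (\ref{gl-23})). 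This yields $j(E_k^\lambda(D))\subseteq E_{m-k+1}^\lambda(D)$; replacing $k$ by $m-k+1$ and applying the invertible $j$ gives the reverse inclusion, whence equality.

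There is essentially no hard step, as the phrase "implies the following proposition immediately" already signals: the proposition is a formal consequence of Lemma \ref{lem-1-3}. The only points requiring care are the index bookkeeping in (\ref{gl-19})—picking exactly the equations that produce (\ref{gl-23}) and (\ref{gl-24}) and using the zero-subbundle convention at the two ends of the range—and the remark that $\lambda\ne0$ is used throughout (it is inverted in the definition of $e_k^\lambda$), which together make the direct-sum and $j$-equivariance arguments go through cleanly.
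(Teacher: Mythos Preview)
Your proposal is correct and follows exactly the route the paper intends: the paper simply states that Lemma~\ref{lem-1-3} ``implies the following proposition immediately'' and then remarks that (\ref{gl-26}) follows from (\ref{gl-16}), (\ref{gl-18}) and (\ref{gl-23}), which is precisely what you have spelled out in detail. Your extra verification of the converse characterization and the index bookkeeping in (\ref{gl-19}) are the natural elaborations of the word ``immediately.''
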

We remark that the relation (\ref{gl-26}) follows from (\ref{gl-16}), (\ref{gl-18})
and (\ref{gl-23}).\\
Now, let $M$ be compact. Then, for any $\varphi, \psi \in \Gamma (\Sigma)$,
we use the notations
\begin{displaymath}
(\varphi, \psi) := \int\limits_M \langle \varphi , \psi \rangle \omega \quad
, \quad \| \psi \| := \sqrt{(\psi , \psi )} \ , 
\end{displaymath}
where $\omega := \frac{1}{m!} \Omega^m$ is the volume form. It is well-known 
that then $D_+$ and $D_-$ are adjoint to each other with respect to this Hermitian
$L^2$-scalar-product, i.e., it holds that
\begin{equation}\label{gl-27}
(D_{\pm} \varphi, \psi )=( \varphi, D_{\mp} \psi ) \ . 
\end{equation}
\begin{prop}\label{prop-1-5}
Let $M$ be a compact spin K\"ahler manifold of complex dimension $m$ and let
$\lambda \not= 0$ be any eigenvalue of the Dirac operator $D$. Then we have 
the following:
\begin{itemize}
\item[(i)] The corresponding decomposition (\ref{gl-22}) is orthogonal with 
respect to the $L^2$-scalar-product $( \cdot , \cdot)$.\\
\item[(ii)] For every $k \in \{ 1,2, \ldots , m \}$ and every eigenspinor
$\psi = \psi_{k-1} + \psi_k \in E^{\lambda}_k (D)$, the components
$\psi_{k-1} \in \Gamma (\Sigma_{k-1})$ and $\psi_k \in \Gamma (\Sigma_k)$ have
the same length
\begin{equation}\label{gl-28}
\| \psi_{k-1} \| = \| \psi_k \| \ . 
\end{equation}
\end{itemize}
\end{prop}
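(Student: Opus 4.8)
The plan is to read both statements off from the explicit description of $E^{\lambda}_k(D)$ in Proposition~\ref{prop-1-4} together with the adjointness relation (\ref{gl-27}); no further analytic input is needed, and since $M$ is compact the eigenvalue $\lambda$ may be taken real.

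I would begin with (ii). Fix $k$ and an eigenspinor $\psi = \psi_{k-1}+\psi_k\in E^{\lambda}_k(D)$, whose components satisfy (\ref{gl-23})--(\ref{gl-24}). Using the first equation of (\ref{gl-23}), then (\ref{gl-27}), then the second equation of (\ref{gl-23}),
\begin{displaymath}
\lambda\,\|\psi_k\|^2 = (\lambda\psi_k,\psi_k) = (D_+\psi_{k-1},\psi_k) = (\psi_{k-1},D_-\psi_k) = (\psi_{k-1},\lambda\psi_{k-1}) = \lambda\,\|\psi_{k-1}\|^2 .
\end{displaymath}
As $\lambda\neq 0$, this gives $\|\psi_{k-1}\| = \|\psi_k\|$.

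For (i) it suffices to show $E^{\lambda}_k(D)\perp E^{\lambda}_l(D)$ for $k\neq l$; by symmetry of $(\cdot,\cdot)$ assume $k<l$. Write $\psi = \psi_{k-1}+\psi_k\in E^{\lambda}_k(D)$ and $\varphi = \varphi_{l-1}+\varphi_l\in E^{\lambda}_l(D)$ in the form given by Proposition~\ref{prop-1-4}, so that the four components lie in $\Gamma(\Sigma_{k-1})$, $\Gamma(\Sigma_k)$, $\Gamma(\Sigma_{l-1})$, $\Gamma(\Sigma_l)$ respectively. Since the splitting (\ref{gl-04}) is pointwise orthogonal, sections of $\Sigma_i$ and $\Sigma_j$ are $L^2$-orthogonal whenever $i\neq j$; hence $(\psi,\varphi)$ is a sum of terms each of which vanishes unless two of the indices $k-1,k,l-1,l$ agree. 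If $l\geq k+2$ no two of them agree and $(\psi,\varphi)=0$ immediately, so the only remaining case is $l=k+1$, where the single surviving term is $(\psi_k,\varphi_{l-1})$ with $l-1=k$.

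The one step that is not purely formal is this adjacent case. Here I would use that $D_+\psi_k=0$ by (\ref{gl-24}) applied to $\psi$, and that $\lambda\varphi_{l-1}=D_-\varphi_l$ by the first equation of (\ref{gl-23}) applied to $\varphi$; then (\ref{gl-27}) gives
\begin{displaymath}
\lambda\,(\psi_k,\varphi_{l-1}) = (\psi_k,D_-\varphi_l) = (D_+\psi_k,\varphi_l) = 0 ,
\end{displaymath}
and $\lambda\neq 0$ forces $(\psi_k,\varphi_{l-1})=0$, whence $(\psi,\varphi)=0$ in all cases. This establishes the $L^2$-orthogonality of (\ref{gl-22}). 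I do not expect a genuine obstacle: the whole argument amounts to keeping track of which $\Sigma_j$ each spinor component lies in, together with a single use of adjointness in the adjacent-index case.
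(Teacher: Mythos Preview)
Your proof is correct. Part (ii) is essentially identical to the paper's argument (the paper starts from $\lambda\|\psi_{k-1}\|^2$ instead of $\lambda\|\psi_k\|^2$, but the chain of equalities is the same use of (\ref{gl-23}) and (\ref{gl-27})).

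For part (i) you take a genuinely different route. The paper proves orthogonality by showing that each projector $e^{\lambda}_k$ is selfadjoint with respect to $(\cdot,\cdot)$; together with the algebraic identities $(e^{\lambda}_k)^2=e^{\lambda}_k$ and $e^{\lambda}_k\circ e^{\lambda}_l=0$ from Lemma~\ref{lem-1-3}, this forces the images to be mutually orthogonal. You instead argue directly from the explicit description of $E^{\lambda}_k(D)$ in Proposition~\ref{prop-1-4}: the pointwise orthogonality of the $\Sigma_j$ kills all cross terms except in the adjacent case $l=k+1$, which you then dispose of with one application of (\ref{gl-24}) and (\ref{gl-27}). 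Your approach is a bit more hands-on but entirely elementary and avoids invoking Lemma~\ref{lem-1-3}; the paper's approach is cleaner in that a single selfadjointness computation handles all pairs $(k,l)$ at once. One small quibble: you cite ``the first equation of (\ref{gl-23})'' for $\lambda\varphi_{l-1}=D_-\varphi_l$, but it is the second; this does not affect the argument.
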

\begin{proof}
For every $k \in \{ 1,2, \ldots , m\}$ and any $\varphi, \psi \in \Gamma (\Sigma)$, it holds
that 
\begin{eqnarray*}
(e^{\lambda}_k \varphi , \psi ) = \frac{1}{\lambda} (D_- \varphi_k + D_+ \varphi_{k-1} , \psi )
&=& \frac{1}{\lambda} \Big(( D_- \varphi_k, \psi_{k-1} )+ (D_+ \varphi_{k-1} , \psi_k)\Big) 
\stackrel{(\ref{gl-27})}{=}\\[0.5em]
\frac{1}{\lambda} \Big(( \varphi_k , D_+ \psi_{k-1})+ (\varphi_{k-1} , D_- \psi_k )\Big) &=&
\frac{1}{k} \Big(( \varphi , D_+ \psi_{k-1} ) + \varphi , D_- \psi_k )\Big)= \\[0.5em]
\Big( \varphi , \frac{1}{\lambda} (D_+ \psi_{k-1} + D_- \psi_k ) \Big) &=& (\varphi , e^{\lambda}_k 
\psi ) \ . 
\end{eqnarray*}
Thus, $e^{\lambda}_k$ is selfadjoint. This proves the assertion (i).\\
Finally, we have
\begin{displaymath}
\lambda \| \psi_{k-1} \|^2 = (\lambda \psi_{k-1} , \psi_{k-1} ) \stackrel{(\ref{gl-23})}{=} 
(D_- \psi_k , \psi_{k-1} ) \stackrel{(\ref{gl-27})}{=} (\psi_k , D_+ \psi_{k-1})
\stackrel{(\ref{gl-23})}{=}(\psi_k , \lambda \psi_k )= \lambda \| \psi_k \|^2 \ . 
\end{displaymath}
This implies (\ref{gl-28}) since $\lambda \not= 0$.
\end{proof}
It is not excluded that in the splitting (\ref{gl-22}) some of the subspaces $E^{\lambda}_k (D)$ are
trivial. For any eigenvalue $\lambda \in \mathrm{Spec} (D) - \{ 0 \}$, we define the type of
$\lambda$ by 
\begin{displaymath}
\mathrm{typ} (\lambda) := \min  \{ k \in \{ 1,2, \ldots , m  \} \ | \ E^{\lambda}_k (D) \not= 0 
\} \ . 
\end{displaymath}
Then (\ref{gl-26}) implies
\begin{equation}\label{gl-29}
1 \le \mathrm{typ} (\lambda) \le \left[ \frac{m+1}{2} \right] \ , 
\end{equation}
where $[ \cdot ]$ denotes the integer part. Thus, we have a map
\begin{displaymath}
\mathrm{typ} : \mathrm{Spec} (D) - \{ 0 \} \to \left\{ 1,2, \ldots , \left[ \frac{m+1}{2} \right] \right\} \ . 
\end{displaymath}
For $k \in \left\{ 1,2, \ldots , \left[ \frac{m+1}{2} \right] \right\}$, we define $\mathrm{Spec_k}
(D) := \mathrm{typ}^{-1} (\{ k \} )$.

\section{The known results}

In this section we shortly describe the known main results concerning lower estimates for the 
first eigenvalues of all types of the Dirac operator on compact K\"ahler manifolds with positive scalar
curvature. Let $M$ be a spin K\"ahler manifold of dimension $n = 2m$. For $k \in \{ 0,1, \ldots , m \}$, 
we consider the K\"ahlerian twistor operator of degree $k$ \cite{11}
\begin{displaymath}
\mathcal{D}^{(k)} : \Gamma (\Sigma_k ) \to \Gamma (TM \otimes \Sigma_k )
\end{displaymath}
locally defined by $\mathcal{D}^{(k)} \psi := X^a \otimes \mathcal{D}^{(k)}_{X_a} \psi$ with
\begin{displaymath}
\mathcal{D}^{(k)}_X \psi := \nabla_X \psi + \frac{1}{2(k+1)} \bar{p} (X) \cdot D_+ \psi + 
\frac{1}{2(m-k+1)}p (X) \cdot D_- \psi \ . 
\end{displaymath}
We see that $\psi \in \Gamma (\Sigma_k)$ is in the kernel of $\mathcal{D}^{(k)}$ if it satisfies the 
equation
\begin{equation}\label{gl-30}
\nabla_X \psi + \frac{1}{2(k+1)} \bar{p} (X) \cdot D_+ \psi + \frac{1}{2(m-k+1)} p (X) 
\cdot D_- \psi =0
\end{equation}
for every real vector field $X$. The elements of $\mathrm{ker} (\mathcal{D}^{(k)})$ are
called K\"ahlerian twistor spinors of degree $k$. We remark that (\ref{gl-30}) is equivalent
to the two equations
\begin{equation}\label{gl-31}
\nabla_{\bar{p} (X)} \psi + \frac{1}{2(k+1)} \bar{p} (X) \cdot D_+ \psi =0 \ , 
\end{equation}
\begin{equation}\label{gl-32}
\nabla_{p(X)} \psi + \frac{1}{2(m-k+1)} p (X) \cdot D_- \psi =0 \ . 
\end{equation}
The K\"ahlerian twistor operator
\begin{displaymath}
\mathcal{D} : \Gamma ( \Sigma) \to \Gamma (TM \otimes \Sigma )
\end{displaymath}
is then defined by $\mathcal{D} := \mathcal{D}^{(0)} \oplus \mathcal{D}^{(1)} \oplus
\ldots \oplus \mathcal{D}^{(m)}$.\\
Thus, if $\psi = \psi_0 + \psi_1 + \cdots + \psi_m$ is the decomposition of $\psi \in \Gamma
(\Sigma)$ according to (\ref{gl-04}), then we have by definition
\begin{equation} \label{gl-33}
\mathcal{D} \psi = \mathcal{D}^{(0)} \psi_0 + \mathcal{D}^{(1)} \psi_1 + \cdots + 
\mathcal{D}^{(m)} \psi_m \ . 
\end{equation}
It is easy to see that, for all $\psi \in \Gamma (\Sigma)$ and every local frame of vector fields
$(X_1 , \ldots , X_m)$, the equations
\begin{equation} \label{gl-34}
p (X^a)  \cdot \mathcal{D}_{X_a} \psi =0 \quad , \quad \bar{p} (X^a)\cdot \mathcal{D}_{X_a} 
\psi =0
\end{equation}
are satisfied. This implies
\begin{equation} \label{gl-35}
X^a \cdot \mathcal{D}_{X_a} \psi =0 \ , 
\end{equation}
i.e., the image of $\mathcal{D}$ is contained in the kernel of the Clifford multiplication.\\
Using the orthogonal decomposition (\ref{gl-33}) one proves the following proposition by a 
straightforward calculation \cite{11}.
\begin{prop} \label{prop-2-1}
For any $\psi \in  \Gamma (\Sigma)$, we have the equation
\begin{equation}\label{gl-36}
|  \mathcal{D} \psi |^2 = \sum\limits^m_{k=0} \left( | \nabla \psi_k |^2 - \frac{1}{2(k+1)} | D_+ \psi_k |^2 - \frac{1}{2(m-k+1)} | D_- \psi_k |^2 \right) \ , 
\end{equation}
where $\psi = \psi_0 + \psi_1 + \cdots + \psi_m$ is the decomposition according to (\ref{gl-04}).
\end{prop}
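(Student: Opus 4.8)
The plan is to compute everything pointwise in a local \emph{orthonormal} frame $(X_1,\dots,X_n)$, so that $X^a=X_a$ and $|\mathcal{D}\psi|^2=\sum_a|\mathcal{D}_{X_a}\psi|^2$. The first --- and really the only conceptual --- step is a degree count. Since $\nabla$ preserves the splitting (\ref{gl-04}) (because $\Omega$ is parallel), since $D_+\psi_k\in\Gamma(\Sigma_{k+1})$ and $D_-\psi_k\in\Gamma(\Sigma_{k-1})$ by (\ref{gl-14}), and since Clifford multiplication by $\bar p(X)$ lowers and by $p(X)$ raises the $\Sigma$-degree by one, each term $\mathcal{D}^{(k)}_X\psi_k=\nabla_X\psi_k+\frac{1}{2(k+1)}\bar p(X)\cdot D_+\psi_k+\frac{1}{2(m-k+1)}p(X)\cdot D_-\psi_k$ is again a section of $\Sigma_k$. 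Hence the terms on the right-hand side of (\ref{gl-33}) have pairwise orthogonal values at each point, so $|\mathcal{D}\psi|^2=\sum_{k=0}^m|\mathcal{D}^{(k)}\psi_k|^2$; it therefore suffices to prove the claimed identity with the sum replaced by its $k$-th summand, i.e. for a single $\phi:=\psi_k\in\Gamma(\Sigma_k)$.

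Fixing $k$, I would expand $\sum_a|\mathcal{D}^{(k)}_{X_a}\phi|^2$ into three ``diagonal'' squared-norm terms and three ``mixed'' terms (twice the real part of the corresponding inner products). For the diagonal terms the tool is the Hermitian adjointness $\langle p(X)\cdot\varphi,\eta\rangle=-\langle\varphi,\bar p(X)\cdot\eta\rangle$ (immediate from $p(X)=\frac12(X-iJX)$ and the skew-adjointness of Clifford multiplication by real vectors) together with the summed Clifford identities (\ref{gl-09}): for instance $\sum_a|\bar p(X_a)\cdot D_+\phi|^2=-\langle D_+\phi,\,p(X_a)\cdot\bar p(X^a)\cdot D_+\phi\rangle=-\langle D_+\phi,(-m+i\Omega)D_+\phi\rangle$, and since $D_+\phi\in\Gamma(\Sigma_{k+1})$ the endomorphism $i\Omega$ acts on it by the scalar $-(2(k+1)-m)$, so this equals $2(k+1)|D_+\phi|^2$ and the $\frac{1}{2(k+1)}\bar p(X)\cdot D_+\phi$-term contributes $\frac{1}{2(k+1)}|D_+\phi|^2$. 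Symmetrically, using $\bar p(X_a)\cdot p(X^a)=-m-i\Omega$ and $D_-\phi\in\Gamma(\Sigma_{k-1})$, the $p(X)\cdot D_-\phi$-term contributes $\frac{1}{2(m-k+1)}|D_-\phi|^2$.

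For the mixed terms I would observe that $\sum_a\langle\nabla_{X_a}\phi,\bar p(X_a)\cdot D_+\phi\rangle=-\langle\,p(X^a)\cdot\nabla_{X_a}\phi,\,D_+\phi\rangle=-|D_+\phi|^2$ directly from the definition $D_+\phi=p(X^a)\cdot\nabla_{X_a}\phi$ (the value being real), and likewise $\sum_a\langle\nabla_{X_a}\phi,p(X_a)\cdot D_-\phi\rangle=-|D_-\phi|^2$; the third mixed term disappears after summation, since $\sum_a\langle\bar p(X_a)\cdot D_+\phi,p(X_a)\cdot D_-\phi\rangle=-\langle D_+\phi,(\sum_a p(X_a)\cdot p(X^a))D_-\phi\rangle=0$ by (\ref{gl-06}). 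Assembling the pieces, the two surviving mixed contributions are exactly $-2$ times the corresponding diagonal contributions, so $\sum_a|\mathcal{D}^{(k)}_{X_a}\phi|^2=|\nabla\phi|^2-\frac{1}{2(k+1)}|D_+\phi|^2-\frac{1}{2(m-k+1)}|D_-\phi|^2$, and summing over $k$ gives (\ref{gl-36}). The only place demanding care is the sign and scalar bookkeeping in the $i\Omega$-eigenvalue evaluations; the rest is formal manipulation of (\ref{gl-05})--(\ref{gl-09}) and adjointness, and there is no real obstacle once one has noticed that the three correction operators have disjoint degree supports.
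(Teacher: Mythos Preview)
Your proof is correct and follows precisely the route the paper indicates: the paper does not spell out the argument but simply says that using the orthogonal decomposition (\ref{gl-33}) the result follows by a straightforward calculation (with a reference to \cite{11}), and your computation is exactly that straightforward calculation carried out in full. The degree-count reduction to a single $\psi_k$, the use of (\ref{gl-09}) together with the $i\Omega$-eigenvalues on $\Sigma_{k\pm 1}$ for the diagonal terms, and the adjointness $\langle \bar p(X)\cdot\varphi,\eta\rangle=-\langle\varphi,p(X)\cdot\eta\rangle$ for the mixed terms are the standard steps, and your bookkeeping is accurate.
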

\begin{thm} \label{thm-2-2}
Let $M$ be a compact spin K\"ahler manifold of dimension $n=2m$ and let $S_0 > 0$ be the minimum
of the scalar curvature $S$ on $M$. Then, for all $k\in \{ 1,2,\ldots , \left[ \frac{m+1}{2} \right] \}$ 
and every $\lambda \in \mathrm{Spec}_k (D)$, we have the estimate
\begin{equation}\label{gl-37}
\lambda^2 \ge \frac{k}{4k-2} \ S_0 \ . 
\end{equation}
\end{thm}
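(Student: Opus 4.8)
The plan is to combine the K\"ahlerian twistor operator $\mathcal{D}$ with the Schr\"odinger--Lichnerowicz formula $D^2 = \nabla^* \nabla + \tfrac{S}{4}$. Since $\lambda \in \mathrm{Spec}_k (D)$ means $\mathrm{typ}(\lambda) = k$, there is a nonzero eigenspinor $\psi = \psi_{k-1} + \psi_k \in E^{\lambda}_k (D)$ whose components satisfy (\ref{gl-23})--(\ref{gl-25}); in particular $D_- \psi_{k-1} = 0$, $D_+ \psi_{k-1} = \lambda \psi_k$ and $D^2 \psi_{k-1} = \lambda^2 \psi_{k-1}$, while Proposition \ref{prop-1-5}(ii) gives $\|\psi_{k-1}\| = \|\psi_k\|$, a number which is positive since $\psi \neq 0$.

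First I would specialize Proposition \ref{prop-2-1} to the spinor $\psi_{k-1}$, whose only nonzero component in the splitting (\ref{gl-04}) lies in degree $k-1$, so that $\mathcal{D}\psi_{k-1} = \mathcal{D}^{(k-1)}\psi_{k-1}$. In the sum (\ref{gl-36}) written for $\psi_{k-1}$ only the summand of degree $k-1$ survives, and after inserting $D_- \psi_{k-1} = 0$ and $D_+ \psi_{k-1} = \lambda \psi_k$ it collapses to the pointwise identity
\[
| \mathcal{D}\psi_{k-1} |^2 \ = \ | \nabla \psi_{k-1} |^2 \ - \ \frac{\lambda^2}{2k}\, | \psi_k |^2 \ .
\]
Next I would integrate this over the compact manifold $M$ and eliminate $|\nabla\psi_{k-1}|^2$ by the Schr\"odinger--Lichnerowicz formula: since $D^2 \psi_{k-1} = \lambda^2 \psi_{k-1}$, integration by parts gives $\int_M | \nabla \psi_{k-1} |^2 \omega = \int_M ( \lambda^2 - \tfrac{S}{4} ) | \psi_{k-1} |^2 \omega$. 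Combining this with the displayed identity, with $S \ge S_0$, and with $\|\psi_{k-1}\| = \|\psi_k\|$ then yields
\[
0 \ \le \ \int_M | \mathcal{D}\psi_{k-1} |^2 \omega \ \le \ \Big( \lambda^2 - \frac{S_0}{4} - \frac{\lambda^2}{2k} \Big) \| \psi_{k-1} \|^2 \ .
\]

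Dividing by $\| \psi_{k-1} \|^2 > 0$ and rearranging, this reads $\lambda^2 \big( 1 - \tfrac{1}{2k} \big) \ge \tfrac{S_0}{4}$, i.e. $\lambda^2 \ge \tfrac{k}{4k-2} S_0$, which is (\ref{gl-37}). The step that needs genuine care is the pointwise computation from (\ref{gl-36}): one must keep in mind that it is the twistor operator $\mathcal{D}^{(k-1)}$ (not $\mathcal{D}^{(k)}$) acting on $\psi_{k-1}$, so the relevant coefficient is $\tfrac{1}{2k} = \tfrac{1}{2((k-1)+1)}$; the rest is bookkeeping. I note that running the identical argument with $\psi_k$ in place of $\psi_{k-1}$ would produce only $\lambda^2 \ge \tfrac{m-k+1}{4(m-k+1)-2}\, S_0$, which for $k \le [\tfrac{m+1}{2}]$ is no sharper than (\ref{gl-37}) because $t \mapsto \tfrac{t}{4t-2}$ is decreasing and $m-k+1 \ge k$; so nothing is gained by also invoking $\psi_k$.
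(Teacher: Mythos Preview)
Your proof is correct and follows essentially the same route as the paper: specialize (\ref{gl-36}) to $\psi_{k-1}$, use (\ref{gl-23})--(\ref{gl-25}) to reduce it to $|\mathcal{D}\psi_{k-1}|^2 = |\nabla\psi_{k-1}|^2 - \tfrac{\lambda^2}{2k}|\psi_k|^2$, integrate, apply Schr\"odinger--Lichnerowicz and (\ref{gl-28}), and read off (\ref{gl-37}). Your closing remark about the alternative computation with $\psi_k$ is exactly the content of the paper's Remark~\ref{rem-2-3} and inequality (\ref{gl-40}).
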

\begin{proof}
Let $\lambda \in \mathrm{Spec}_k (D)$ and let $\psi = \psi_{k-1} + \psi_k \in 
E^{\lambda}_k (D)$ be a corresponding eigenspinor. Inserting the component $\psi_{k-1}$ into
(\ref{gl-36}) and using (\ref{gl-23}), (\ref{gl-24}) we obtain  the equation\\[0.5em]
(*) \hfill $\displaystyle  | \mathcal {D} \psi_{k-1} |^2 = | \nabla \psi_{k-1} |^2 - \frac{\lambda^2}{2k}
| \psi_k |^2 \ . $ \hfill \mbox{}\\[0.5em]
On the other hand, using the Schr\"odinger-Lichnerowicz formula
\begin{equation}\label{gl-38}
\nabla^* \nabla = D^2 - \frac{S}{4}
\end{equation}
and (\ref{gl-25}) we find the equation\\[0.5em]
(2*) \hfill $\displaystyle \nabla^* \nabla \psi_{k-1} = \left( \lambda^2 - \frac{S}{4} \right) 
\psi_{k-1} \ . $ \hfill \mbox{}\\[0.5em]
Now, integrating the equation (*) we have
\begin{eqnarray*}
&& \| \mathcal{D} \psi_{k-1} \|^2 = \| \nabla \psi_{k-1} \|^2 - \frac{\lambda^2}{2k} \| \psi_k\| =
(\nabla^* \nabla \psi_{k-1} ) - \frac{\lambda^2}{2k} \| \psi_k \|^2
\stackrel{(2*)}{=} \\[0.5em]
&& \left( \Big( \lambda^2 - \frac{S}{4} \Big) \  \psi_{k-1} , \psi_{k-1} \right) - \frac{\lambda^2}{2k} 
\| \psi_k \|^2 \le \\[0.5em]
&& \Big( \lambda^2 - \frac{S_0}{4} \Big) \| \psi_{k-1} \|^2 - \frac{\lambda^2}{2k} \| \psi_k \|^2 
\stackrel{(\ref{gl-28})}{=} \left( \frac{2k-1}{2k} \lambda^2 - \frac{S_0}{4} \right) \| \psi_{k-1} 
\|^2 \ . 
\end{eqnarray*}
Thus, we obtain the inequality
\begin{equation}\label{gl-39}
\| \mathcal{D} \psi_{k-1} \|^2 \le \left( \frac{2k-1}{2k} \, \lambda^2 - \frac{S_0}{4} \right)
\| \psi_{k-1} \|^2
\end{equation}
which immediately implies (\ref{gl-37}).
\end{proof}
\begin{remark}\label{rem-2-3}
An analogous calculation as in the proof of Theorem \ref{thm-2-2} shows that, for the 
component $\psi_k$ of $\psi = \psi_{k-1} + \psi_k \in E^{\lambda}_k (D)$, the inequality
\begin{equation} \label{gl-40}
\| \mathcal{D} \psi_k \|^2 \left( \frac{2(m-k+1)-1}{2(m-k+1)} \ \lambda^2 - \frac{S_0}{4} \right)
\, \| \psi_k \|^2
\end{equation}
is valid.
\end{remark}
\begin{remark} \label{rem-2-4}
We remember that we have the estimate
\begin{equation} \label{gl-41}
\lambda^2 \ge \frac{n}{4(n-1)} \, S_0
\end{equation}
for every eigenvalue $\lambda$ of the Dirac operator $D$ on a compact Riemannian spin manifold of
dimension $n$ with positive scalar curvature $S$ \cite{1}. Thus, the suppositions of Theorem
\ref{thm-2-2} imply that there are no harmonic spinors $(0 \not\in \mathrm{Spec} (D))$. 
Hence, since the lower bound in (\ref{gl-37}) decreases if the type $k$ of the eigenvalue $\lambda$
increases, Theorem \ref{thm-2-2} immediately implies the estimate (\ref{gl-01}).
\end{remark}
\begin{remark} \label{rem-2-5}
Let $\lambda^{(k)}_1 \in \mathrm{Spec}_k (D)$ denote the first eigenvalue of type $k$.
Then in the limiting case of (\ref{gl-37}) we have
\begin{equation}\label{gl-42}
\lambda^{(k)}_1 = \sqrt{\frac{k}{4k-2} \, S}
\end{equation}
since the scalar curvature has to be constant in the limiting case. Furthermore, (\ref{gl-39}) implies
the equation $\mathcal{D} \psi_{k-1}=0$ which is equivalent to the equation
\begin{equation}\label{gl-43}
\nabla_X \psi_{k-1} + \frac{\lambda^{(k)}_1}{2k} \, \bar{p} (X) \cdot \psi_k = 0
\end{equation}
for every real vector field $X$. In particular, it follows immediately that $\psi_{k-1}$ is antiholomorphic
\begin{equation}\label{gl-44}
\nabla_{p(X)} \, \psi_{k-1} =0 \ . 
\end{equation}
\end{remark}
Thus, the limiting case of (\ref{gl-37}) is characterized in general by the existence of a special
antiholomorphic K\"ahlerian twistor spinor of degree $k-1$.
\begin{remark}\label{rem-2-6}
In the special limiting case with $k= \frac{m+1}{2}$ (limiting case of (\ref{gl-01}) with odd $m$), the
first eigenvalue $\lambda_1$ of $D$ is given by
\begin{equation}\label{gl-45}
\lambda_1 = \sqrt{\frac{m+1}{4m} \, S}
\end{equation}
and (\ref{gl-43}) takes the special form
\begin{equation}\label{gl-46}
\nabla_X \psi_{k-1} + \frac{\lambda_1}{m+1} \, \bar{p} (X) \cdot \psi_k =0 \ . 
\end{equation}
\end{remark}
Since $m-k+1=k$, (\ref{gl-40}) additionally implies the equation $\mathcal{D} \psi_k=0$ which
is equivalent to
\begin{equation}\label{gl-47}
\nabla_X \psi_k + \frac{\lambda_1}{m+1} \, p(X) \cdot \psi_{k-1} =0 \ . 
\end{equation}
In particular, $\psi_k$ is holomorphic
\begin{equation}\label{gl-48}
\nabla_{\bar{p} (X)} \, \psi_k =0 \ . 
\end{equation}
The equations (\ref{gl-46}), (\ref{gl-47}) show that, by definition, $\psi = \psi_{k-1} + \psi_k$ is
a K\"ahlerian Killing spinor. Conversely, the existence of a K\"ahlerian Killing spinor on a 
K\"ahler manifold $M$ with positive scalar curvature implies that $M$ is Einstein of odd complex dimension $m$ and has the limiting property
\cite{4,8}.\\

The first results concerning the classification of limiting manifolds for the estimate 
(\ref{gl-01}) have been proved in complex dimensions $m=2$ and $m=3$, where the only
limiting manifolds up to equivalence are $S^2 \times S^2, S^2 \times T^2$ \cite{2} and
$\C P^3, F(\C^3)$ \cite{6}, respectively ($T^2$ denotes the flat torus and $F(\C^3)$ the flag
manifold.). The general classification was given by A.~Moroianu in \cite{9} for $m$ odd and in
\cite{10} for $m$ even. We collect his results in the following theorem.
\begin{thm} \label{thm-2-7}
(i) In odd complex dimensions $m=4l+1$, the only limiting manifold for the estimate (\ref{gl-01})
is the complex projective space $\C P^m$. In odd complex dimensions $m=4l+3$, the limiting
manifolds of (\ref{gl-01}) are just the twistor spaces over quaternionic K\"ahler manifolds of 
positive scalar curvature.\\
(ii) A K\"ahler manifold $M$ of even complex dimension $m \ge 4$ is a limiting manifold of (\ref{gl-01})
if and only if its universal cover is isometric to a Riemannian product $N \times \R^2$, where 
$N$ is a limiting manifold of (\ref{gl-01}) for the odd complex dimension $m-1$ and $M$ is the 
suspension over a flat parallelogram of two commuting isometries of $N$ preserving  a K\"ahlerian
Killing spinor.
\end{thm}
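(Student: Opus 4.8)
The theorem compiles A.~Moroianu's classification, so the plan is to reconstruct the two arguments of \cite{9,10}; I sketch them in turn.

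\textbf{Part (ii), even $m$.} I would begin by pinning down the type. A limiting manifold of (\ref{gl-01}) with even $m$ has a first eigenvalue of type $k=m/2$, since this is the unique $k$ for which the bound $\frac{k}{4k-2}S_0$ of Theorem \ref{thm-2-2} coincides with the bound $\frac{m}{4(m-1)}S_0$ of (\ref{gl-01}), and by Remark \ref{rem-2-4} such an eigenvalue exists (no harmonic spinors). Hence there is an eigenspinor $\psi=\psi_{k-1}+\psi_k$ with $\mathcal{D}\psi_{k-1}=0$, i.e.\ satisfying (\ref{gl-43}), and with $\psi_{k-1}$ antiholomorphic by (\ref{gl-44}); note that, unlike the odd case, one does \emph{not} get the companion equation for $\psi_k$, because $m-k+1\neq k$. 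The first real step is to compute the integrability conditions of (\ref{gl-43}): differentiate once more, antisymmetrize, and insert the spinorial curvature identity; pairing with suitable test spinors and using (\ref{gl-10}) one forces the Ricci endomorphism to have exactly the two eigenvalues $0$ (multiplicity $2$) and $\frac{S}{2(m-1)}$ (multiplicity $2m-2$), in particular $S$ constant. Let $\mathcal{V}\subset TM$ be the $J$-invariant $2$-plane field on which $\Ric$ vanishes.

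The crux of part (ii) is then to show that $\mathcal{V}$ (equivalently its orthogonal complement) is parallel. I would differentiate the pointwise Ricci splitting, feed the result into the second Bianchi identity, and combine it with the extra information the twistor spinor $\psi_{k-1}$ carries along $\mathcal{V}$; I expect this to be the main obstacle. Once $\mathcal{V}$ is parallel, the de Rham decomposition of the universal cover yields a Kähler product $\tilde M\cong N\times\tilde{\mathcal{V}}$ with $\tilde{\mathcal{V}}$ flat and simply connected, hence $\cong\R^2$, and $N$ Kähler--Einstein of complex dimension $m-1$ with the same positive scalar curvature. Restricting $\psi$ to the factor $N$ (its $\R^2$-component being covariantly constant) produces a Kähler--Killing spinor on $N$; by Remark \ref{rem-2-6} and the converse cited after it, $N$ is a limiting manifold of (\ref{gl-01}) in odd complex dimension $m-1$. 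Descending from $\tilde M$ to $M$ then exhibits $M$ as the quotient of $N\times\R^2$ by a $\Z^2$-action whose $\R^2$-part is two translations spanning a parallelogram and whose $N$-part is two commuting isometries preserving the Kähler--Killing spinor — exactly the asserted suspension. The converse is obtained by running this construction in reverse and checking directly that the descended spinor attains equality in (\ref{gl-01}).

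\textbf{Part (i), odd $m$.} Here one knows (Remark \ref{rem-2-6} together with the cited converse \cite{4,8}) that a limiting manifold of odd complex dimension $m$ is precisely a compact Kähler manifold carrying a Kähler--Killing spinor, and is in particular Einstein with positive scalar curvature; the task is thus to classify such manifolds. The dichotomy $m=4l+1$ versus $m=4l+3$ comes from the sign $j^2=(-1)^{m(m+1)/2}$ in (\ref{gl-17}): with $j(E^\lambda_k(D))=E^\lambda_{m-k+1}(D)$ from (\ref{gl-26}) and $m-k+1=k$ at $k=(m+1)/2$, the space of Kähler--Killing spinors inherits a real structure when $m\equiv 1\pmod 4$ and a quaternionic one when $m\equiv 3\pmod 4$. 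In the first case, bilinear combinations of the spinor and its $j$-image produce a parallel complex form that reduces the holonomy and identifies $M$ with $\C P^m$ endowed with the Fubini--Study metric. In the second case, the quaternionic module of spinors yields a parallel structure identifying $M$ with the twistor space of a positive quaternionic Kähler manifold $Q^{4l}$ — here I would invoke the LeBrun--Salamon theory of twistor spaces — and conversely every such twistor space, with its canonical Kähler--Einstein metric, is limiting. The $m\equiv 3\pmod 4$ direction, resting on the Kähler/quaternionic correspondence, is the second place where genuinely new geometry beyond the spinorial bookkeeping of Sections 1 and 2 is needed.
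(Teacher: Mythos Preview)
The paper does not prove Theorem~\ref{thm-2-7}. It is stated there purely as a summary of Moroianu's results, with the sentence ``The general classification was given by A.~Moroianu in \cite{9} for $m$ odd and in \cite{10} for $m$ even. We collect his results in the following theorem'' serving as the entire justification; no argument follows the statement. So there is no ``paper's own proof'' to compare your proposal against.

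Your sketch is therefore not a reconstruction of anything in this paper but an outline of the arguments in the cited references \cite{9,10}. As such it is broadly reasonable in shape --- the Ricci eigenvalue computation from the twistor equation, the de~Rham splitting once the null distribution is parallel, and the $j^2$-sign dichotomy governing the real versus quaternionic structure on K\"ahlerian Killing spinors are indeed the organizing ideas --- but you correctly flag the two genuinely hard steps (parallelism of $\mathcal{V}$ in (ii), and the passage from a quaternionic spinor module to the twistor-space identification in (i)) without carrying them out. If your aim is only to match the level of detail in the present paper, a one-line citation to \cite{9,10} already does that; if your aim is to give a self-contained proof, the sketch would need substantial expansion at precisely the points you identify as obstacles.
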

Finally, let us consider the limiting case of the estimate (\ref{gl-37}) for the first eigenvalue 
of type $k$ of $D$ with $1 < k < \frac{m+1}{2}$. By a result of M.~Pilca (see the proof of
Theorem 5.15 in \cite{11}), the corresponding limiting manifolds can not be Einstein. This
situation leads to the question if the estimate (\ref{gl-37}) can be improved in K\"ahler-Einstein 
case in which the type $k$ of the eigenvalue satisfies the condition $1 < k < \frac{m+1}{2}$
which includes the case of even complex dimension $m \ge 4$. A positive answer to this question is
given in the next section.

\section{The K\"ahler-Einstein case}

Let $M$ be a spin K\"ahler manifold of dimension $n=2m$. The holomorphic (antiholomorphic)
part$\nabla^{1,0} \psi (\nabla^{0,1} \psi)$ of the covariant derivative $\nabla \psi$ of a 
spinor $\psi \in  \Gamma (\Sigma)$ is locally defined by
\begin{displaymath}
\nabla^{1,0} \psi := g(X^a) \otimes \nabla_{p(X_a)} \psi = g (\bar{p}(X^a)) \otimes 
\nabla_{X_a} \psi
\end{displaymath}
\begin{displaymath}
(\nabla^{0,1} \psi := g(X^a) \otimes \nabla_{\bar{p}(X_a)} \psi = g ({p}(X^a)) \otimes 
\nabla_{X_a} \psi ) \ , 
\end{displaymath}
where $(X_1 , \ldots , X_n)$ is a local frame of real vector fields and, for a complex vector
field $Z$, $g(Z)$ denotes the complex 1-form given by $(g(Z))(W):= g(Z,W)$. By definition, 
then we have $\nabla \psi = \nabla^{1,0} \psi + \nabla^{0,1} \psi$ with 
\begin{displaymath}
\nabla^{1,0} \psi \in \Gamma (\Lambda^{1,0} \otimes \Sigma) \ , \nabla^{0,1} \psi \in \Gamma
(\Lambda^{0,1} \otimes \Sigma )
\end{displaymath}
and, for any real vector field $X$, it holds that $\nabla^{1,0}_X = \nabla_{p(X)}$, 
$\nabla^{0,1}_X = \nabla_{\bar{p} (X)} \cdot \psi$ is said to be  holomorphic (antiholomorphic)
if $\nabla^{0,1} \psi =0 \ (\nabla^{1,0} \psi =0)$. For any complex vector field $Z,W$, we use the 
notation 
\begin{displaymath}
\nabla^2_{Z,W} := \nabla_Z \circ \nabla_W - \nabla_{\nabla_Z W}
\end{displaymath}
for the corresponding tensoriel second order covariant derivative. We consider the two
K\"ahler-Bochner-Laplacians
\begin{displaymath}
\nabla^{1,0*} \nabla^{1,0} \ , \ \nabla^{0,1*} \nabla^{0,1} \ : \Gamma (\Sigma) \rightarrow
\Gamma (\Sigma)
\end{displaymath}
locally defined by 
\begin{displaymath}
\nabla^{1,0*} \nabla^{1,0} := - \nabla^2_{\bar{p} (X_a), p(X^a)} \ , \ \nabla^{0,1*} \nabla^{0,1} :=
- \nabla^2_{p(X_a) , \bar{p} (X^a)} \ . 
\end{displaymath}
Obviously, these Laplacians and the Bochner Laplacian $\nabla^* \nabla := - \nabla^2_{X_a, X^a}$
of Riemannian spin geometry are related by
\begin{equation}\label{gl-49}
\nabla^{1,0*} \nabla^{1,0} + \nabla^{0,1*} \nabla^{0,1} = \nabla^* \nabla \ . 
\end{equation}
Moreover, we have the operator identities
\begin{equation}\label{gl-50}
2 \nabla^{1,0*} \nabla^{1,0} = D^2 - \frac{S}{4} - \frac{i}{2} \rho \ , 
\end{equation}
\begin{equation}\label{gl-51}
2 \nabla^{0,1*} \nabla^{0,1} = D^2 - \frac{S}{4} + \frac{i}{2} \rho \ , 
\end{equation}
where $D$ is the Dirac operator, $S$ the scalar curvature and $\rho$ the Ricci form. A proof of these
formulas one finds in \cite{7}, Section 4.
\begin{thm}\label{thm-3-1}
Let $M$ be a spin K\"ahler-Einstein manifold of dimension $n=2m$ with positive scalar curvature $S$. 
Then, for every $k \in \{ 1,2, \ldots ,  [ \frac{m+1}{2} ] \}$ and every 
eigenvalue $\lambda \in \mathrm{Spec}_k (D)$, the estimate
\begin{equation}\label{gl-52}
\lambda^2 \ge \frac{m-k+1}{2m} \, S 
\end{equation}
is valid.
\end{thm}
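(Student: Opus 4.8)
The plan is to bypass the Kählerian twistor operator used in Theorem~\ref{thm-2-2} and to apply one of the two Kähler--Bochner--Laplacians from (\ref{gl-50})--(\ref{gl-51}) directly to a single component of an eigenspinor of type $k$. The decisive simplification is that on a Kähler--Einstein manifold the Ricci form is a constant multiple of the Kähler form, so that the endomorphism $\rho$ acts as a scalar on each eigensubbundle $\Sigma_j$; for the right choice of component and of Laplacian, that scalar combines with the $-S/4$ appearing in (\ref{gl-50}) into precisely the constant $\frac{m-k+1}{2m}\,S$.

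Concretely, I would fix $\lambda\in\mathrm{Spec}_k(D)$ (hence $\lambda\neq 0$) and a nonzero eigenspinor $\psi=\psi_{k-1}+\psi_k\in E^{\lambda}_k(D)$; then $\psi_{k-1}\neq 0$, since $\psi_{k-1}=0$ would force $\lambda\psi_k=D_+\psi_{k-1}=0$ by (\ref{gl-23}) and hence $\psi=0$. Since $M$ is Einstein, $\Ric=\frac{S}{2m}\,g$ (so $S$ is constant), and therefore by (\ref{gl-03}) and (\ref{gl-08}) the Ricci form acts on spinors as the endomorphism $\rho=\frac{S}{2m}\,\Omega$. By (\ref{gl-04}), $\psi_{k-1}\in\Gamma(\Sigma_{k-1})$ is an eigenspinor of $\Omega$ for the eigenvalue $i(2k-2-m)$, whence $-\frac{i}{2}\rho\,\psi_{k-1}=\frac{S(2k-2-m)}{4m}\,\psi_{k-1}$.

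Next I would apply the operator identity (\ref{gl-50}) to $\psi_{k-1}$ and use $D^2\psi_{k-1}=\lambda^2\psi_{k-1}$ from (\ref{gl-25}) to get
\[
2\,\nabla^{1,0*}\nabla^{1,0}\psi_{k-1}
=\Big(\lambda^2-\frac{S}{4}+\frac{S(2k-2-m)}{4m}\Big)\psi_{k-1}
=\Big(\lambda^2-\frac{m-k+1}{2m}\,S\Big)\psi_{k-1},
\]
the last step being the elementary identity $-\frac{S}{4}+\frac{S(2k-2-m)}{4m}=\frac{S(2k-2-2m)}{4m}=-\frac{m-k+1}{2m}\,S$. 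Taking the $L^2$-scalar product with $\psi_{k-1}$ and integrating over the compact manifold $M$ yields
\[
2\,\|\nabla^{1,0}\psi_{k-1}\|^2=\Big(\lambda^2-\frac{m-k+1}{2m}\,S\Big)\,\|\psi_{k-1}\|^2 .
\]
Since the left-hand side is nonnegative and $\|\psi_{k-1}\|\neq 0$, this is exactly the estimate (\ref{gl-52}).

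There is no serious obstacle here; the care lies in two bookkeeping choices. First, the normalization of the $\Omega$-eigenvalue on $\Sigma_{k-1}$ must be handled correctly, since it is precisely the collapse of the Ricci-form term against $-S/4$ that produces the favorable constant — and this is also why the estimate involves the scalar curvature $S$ itself rather than its minimum $S_0$ as in Theorem~\ref{thm-2-2}. Second, one must select both the correct component and the correct Laplacian: applying $\nabla^{1,0*}\nabla^{1,0}$ or $\nabla^{0,1*}\nabla^{0,1}$ to $\psi_k$, or $\nabla^{0,1*}\nabla^{0,1}$ to $\psi_{k-1}$, yields only the weaker bounds $\lambda^2\ge\frac{S(m-k)}{2m}$, $\lambda^2\ge\frac{Sk}{2m}$ and $\lambda^2\ge\frac{S(k-1)}{2m}$, respectively, none of which gives (\ref{gl-52}) for $1<k<\frac{m+1}{2}$. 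Finally, note that, in contrast with the proof of Theorem~\ref{thm-2-2}, this argument uses neither the twistor operator $\mathcal{D}$ nor the length equality $\|\psi_{k-1}\|=\|\psi_k\|$ of Proposition~\ref{prop-1-5}.
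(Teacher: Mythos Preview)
Your proof is correct and follows essentially the same route as the paper: apply the K\"ahler--Bochner identity (\ref{gl-50}) to the component $\psi_{k-1}$ of an eigenspinor in $E^\lambda_k(D)$, use the Einstein relation $\rho=\frac{S}{2m}\Omega$ together with the eigenvalue of $\Omega$ on $\Sigma_{k-1}$ and $D^2\psi_{k-1}=\lambda^2\psi_{k-1}$, and conclude from $\|\nabla^{1,0}\psi_{k-1}\|^2\ge 0$. Your explicit justification that $\psi_{k-1}\neq 0$ and your remarks on the alternative component/Laplacian pairings are helpful additions, but the core argument is the same as the paper's.
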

\begin{proof}
We remark firstly that our suppositions imply that $M$ is compact. 
Moreover, the Einstein condition $\Ric = \frac{S}{n} \mathrm{id}$ implies
that the Ricci form is given by\\[0.5em]
(*) \hfill $\displaystyle \rho = \frac{S}{n} \, \Omega \ . $ \hfill \mbox{}\\[0.5em]
Now, let $\lambda \in \mathrm{Spec}_k (D)$ and let $\psi = \psi_{k-1} + \psi_k \in E^{\lambda}_k (D)$ be a corresponding eigenspinor. Then, by integration of
the function $2 |  \nabla^{1,0} \psi_{k-1} |^2$, we obtain
\begin{displaymath}
0 \le 2 \| \nabla^{1,0} \psi_{k-1} \|^2 = (2 \nabla^{1,0*} \nabla^{1,0} 
\psi_{k-1} , \psi_{k-1}) \stackrel{(\ref{gl-50})}{=}
\end{displaymath}
\begin{displaymath}
\left( \Big(D^2 - \frac{S}{4} - \frac{i}{2} \rho \Big) \psi_{k-1} , \psi_{k-1} \right)
\stackrel{(\ref{gl-25}), (*)}{=} \left( \Big( \lambda^2 - \frac{S}{4} - 
\frac{i}{4m} S \Omega \Big) \psi_{k-1} , \psi_{k-1} \right) =
\end{displaymath}
\begin{displaymath}
\left( \Big(\lambda^2 - \frac{S}{4} + \frac{S}{4m} (2(k-1) - m)\Big) \psi_{k-1} , 
\psi_{k-1} \right) = \Big(\lambda^2 - \frac{m-k+1}{2m} \, S \Big) \| \psi_{k-1} \|^2
\end{displaymath}
and, hence, the estimate (\ref{gl-52}).
\end{proof}
Theorem \ref{thm-3-1} immediately implies the following corollary.
\begin{cor}\label{cor-3-2}
If $M$ is a compact spin K\"ahler-Einstein manifold of even complex dimension
$m$ with positive scalar curvature $S$, then we have the estimate (\ref{gl-02})
for every eigenvalue $\lambda$ of $D$.
\end{cor}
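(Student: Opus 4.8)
The plan is to obtain Corollary~\ref{cor-3-2} as an immediate arithmetic consequence of Theorem~\ref{thm-3-1}, the only preliminary point being that every eigenvalue of $D$ carries a well-defined type under the stated hypotheses. First I would note that, since $M$ is compact with $S>0$, the Friedrich-type estimate (\ref{gl-41}) recorded in Remark~\ref{rem-2-4} gives $0\notin\mathrm{Spec}(D)$; hence every eigenvalue $\lambda$ of $D$ is nonzero, Proposition~\ref{prop-1-4} applies, and by (\ref{gl-29}) we have $\mathrm{typ}(\lambda)=k$ for some $k\in\{1,2,\dots,[\tfrac{m+1}{2}]\}$.

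Next I would feed this $k$ into Theorem~\ref{thm-3-1}, which yields $\lambda^2\ge\frac{m-k+1}{2m}\,S$. The right-hand side is a strictly decreasing function of $k$, so over the admissible range it attains its minimum at the largest possible type. Because $m$ is even, $[\tfrac{m+1}{2}]=\tfrac{m}{2}$, and substituting $k=\tfrac{m}{2}$ gives
\[
\frac{m-\tfrac{m}{2}+1}{2m}\,S \;=\; \frac{\tfrac{m}{2}+1}{2m}\,S \;=\; \frac{m+2}{4m}\,S,
\]
which is precisely the bound (\ref{gl-02}). Since the argument is uniform in the type $k$, the inequality $\lambda^2\ge\frac{m+2}{4m}S$ holds for every eigenvalue $\lambda$ of $D$, as claimed.

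There is no genuine obstacle here: the corollary is purely a monotonicity-plus-bookkeeping step once Theorem~\ref{thm-3-1} is in hand. The only places warranting a word of care are the appeal to (\ref{gl-41}) to exclude harmonic spinors (so that the notion of type, hence Theorem~\ref{thm-3-1}, is available for \emph{all} eigenvalues) and the evaluation of $[\tfrac{m+1}{2}]$ for even $m$, which is what pins the extremal type at $k=\tfrac{m}{2}$ and thereby produces the constant $\tfrac{m+2}{4m}$ rather than the weaker $\tfrac{m}{4(m-1)}$ of (\ref{gl-01}).
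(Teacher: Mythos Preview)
Your proposal is correct and matches the paper's approach: the paper simply states that Theorem~\ref{thm-3-1} ``immediately implies'' Corollary~\ref{cor-3-2}, and the details you supply---excluding harmonic spinors via (\ref{gl-41}) so that every eigenvalue has a type, then minimizing the decreasing bound $\tfrac{m-k+1}{2m}S$ at $k=[\tfrac{m+1}{2}]=\tfrac{m}{2}$---are exactly the intended unpacking (cf.\ the parallel reasoning in Remark~\ref{rem-2-4}).
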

\begin{remark}\label{rem-3-3}
The estimate (\ref{gl-52}) improves (\ref{gl-37}) in K\"ahler-Einstein case if 
the type $k$ of the eigenvalue satisfies the condition $1 < k < \frac{m+1}{2}$, 
in particular, if $k= \frac{m}{2}$ with even $m \ge 4$.
\end{remark}
\begin{remark}\label{rem-3-4}
The proof of Theorem \ref{thm-3-1} shows that in the limiting case of (\ref{gl-52})
the component $\psi_{k-1}$ of the eigenspinor $\psi = \psi_{k-1} + \psi_k
\in E^{\lambda}_k (D)$ must be antiholomorphic $(\nabla^{1,0} \psi_{k-1} =0)$.
Conversely, it is known that in K\"ahler-Einstein case every antiholomorphic
spinor $\varphi \in \Gamma (\Sigma_{k-1})$ satisfies the equation
\begin{equation}\label{gl-53}
D^2 \varphi = \frac{m-k+1}{2m} \, S \varphi
\end{equation}
(see \cite{7}, Proposition 10). Hence, the limiting case of (\ref{gl-52}) is 
characterized by the existence of an antiholomorphic section in the subbundle
$\Sigma_{k-1} \subset \Sigma$ or, equivalently, by the existence of a 
holomorphic section in $\Sigma_{m-k+1}$ if we take into account (\ref{gl-26}).
\end{remark}
By Remark \ref{rem-3-4}, we immediately obtain our next theorem.
\begin{thm} \label{thm-3-5}
Let $M$ be a spin K\"ahler-Einstein manifold of positive scalar curvature $S$ and 
complex dimension $m$. Then the inequality (\ref{gl-52}) is an equality for the 
first eigenvalue of the type $k$ of the Dirac operator $D$ if and only if
the bundle $\Sigma_{k-1} (\Sigma_{m-k+1})$ admits an antiholomorphic
(holomorphic) section. In particular, for even $m$, (\ref{gl-02}) is an 
equality for the first eigenvalue of $D$ if and only if the bundle 
$\Sigma_{\frac{m-2}{2}} (\Sigma_{\frac{m+2}{2}})$ has an 
antiholomorphic (holomorphic) section.
\end{thm}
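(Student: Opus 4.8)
The statement is an equivalence whose content is essentially recorded already in Remark~\ref{rem-3-4}; my plan is to organize its two implications cleanly and to supply the two points that Remark~\ref{rem-3-4} leaves implicit: that the real structure $j$ interchanges antiholomorphic sections of $\Sigma_{k-1}$ with holomorphic sections of $\Sigma_{m-k+1}$, and that an antiholomorphic section of $\Sigma_{k-1}$ really produces an eigenvalue of $D$ \emph{of type exactly $k$}. Throughout, $M$ is compact, so the $L^2$-arguments of Section~1 apply, as already observed in the proof of Theorem~\ref{thm-3-1}.

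For the ``only if'' direction, suppose the first eigenvalue $\lambda:=\lambda^{(k)}_1$ of type $k$ satisfies $\lambda^2=\frac{m-k+1}{2m}S$ and pick a nonzero $\psi=\psi_{k-1}+\psi_k\in E^{\lambda}_k(D)$ (this space is nonzero by the definition of the type). Running the computation in the proof of Theorem~\ref{thm-3-1} gives $0\le 2\|\nabla^{1,0}\psi_{k-1}\|^2=\bigl(\lambda^2-\tfrac{m-k+1}{2m}S\bigr)\|\psi_{k-1}\|^2=0$, so $\psi_{k-1}$ is antiholomorphic, and $\psi_{k-1}\ne0$ because $\|\psi_{k-1}\|=\|\psi_k\|$ by Proposition~\ref{prop-1-5}(ii) while $\psi\ne0$. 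Since $j$ is parallel, antilinear and commutes with Clifford multiplication by real vectors, one has $\nabla^{0,1}_X\circ j=j\circ\nabla^{1,0}_X$ for every real $X$; together with (\ref{gl-16}) and the fact that $j$ is a bundle isomorphism, this turns $\psi_{k-1}$ into a nonzero holomorphic section $j(\psi_{k-1})$ of $\Sigma_{m-k+1}$, and the correspondence is bijective.

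For the ``if'' direction, let $\varphi\in\Gamma(\Sigma_{k-1})$ be a nonzero antiholomorphic section; by Proposition~10 of \cite{7} (see Remark~\ref{rem-3-4}) $D^2\varphi=\lambda^2\varphi$ with $\lambda:=\sqrt{\frac{m-k+1}{2m}S}>0$. The first step is to show $D_-\varphi=0$: since $D^2$ commutes with $D_-$ and preserves each $\Sigma_r$, the spinor $D_-\varphi\in\Gamma(\Sigma_{k-2})$ again satisfies $D^2(D_-\varphi)=\lambda^2D_-\varphi$, and feeding this into (\ref{gl-50}) on $\Sigma_{k-2}$ together with the Einstein identity $\rho=\frac{S}{2m}\Omega$ and $\Omega|_{\Sigma_{k-2}}=i(2k-4-m)$ yields $2\|\nabla^{1,0}(D_-\varphi)\|^2=-\frac{S}{2m}\|D_-\varphi\|^2$, which forces $D_-\varphi=0$ since $S>0$. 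Now set $\psi:=\varphi+\frac1\lambda D_+\varphi$; it is nonzero because its $\Sigma_{k-1}$- and $\Sigma_k$-components would otherwise both vanish, and using (\ref{gl-11})--(\ref{gl-13}) and $D_-\varphi=0$ one checks $D\psi=\lambda\psi$ and that $\psi_{k-1}=\varphi$, $\psi_k=\frac1\lambda D_+\varphi$ satisfy (\ref{gl-23})--(\ref{gl-25}), so $\psi\in E^\lambda_k(D)$ by Proposition~\ref{prop-1-4}; in particular $\mathrm{typ}(\lambda)\le k$. If $\mathrm{typ}(\lambda)=t<k$, Theorem~\ref{thm-3-1} would give $\lambda^2\ge\frac{m-t+1}{2m}S>\frac{m-k+1}{2m}S=\lambda^2$, a contradiction; hence $\lambda\in\mathrm{Spec}_k(D)$, and by Theorem~\ref{thm-3-1} again $\lambda^2=\frac{m-k+1}{2m}S$ is the smallest square of a type-$k$ eigenvalue, i.e. (\ref{gl-52}) is an equality for the first eigenvalue of type $k$. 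The last sentence of the theorem is the case $k=\frac m2$, using — as for Corollary~\ref{cor-3-2} — that for even $m$ every eigenvalue of $D$ has type $\le\frac m2$, so (\ref{gl-02}) is precisely (\ref{gl-52}) with $k=\frac m2$ and can be saturated only by a type-$\frac m2$ eigenvalue.

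I expect the crux to lie in the ``if'' direction: the identity $D_-\varphi=0$ (which makes $\psi=\varphi+\lambda^{-1}D_+\varphi$ a genuine eigenspinor whose components sit in degrees $k-1,k$), and above all the step that pins its type to be \emph{exactly} $k$ rather than smaller — there it is precisely the type-refined estimate of Theorem~\ref{thm-3-1} that does the real work. The ``only if'' direction and the $j$-duality are routine once the structural facts of Section~1 are in place.
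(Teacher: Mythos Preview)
Your proof is correct and follows the paper's own approach, which is simply ``By Remark~\ref{rem-3-4}, we immediately obtain our next theorem.'' You have carefully expanded the two points the paper leaves implicit --- that $D_-\varphi=0$ (so the associated eigenspinor really lives in $E^\lambda_k(D)$) and that the resulting eigenvalue has type exactly $k$ --- and your arguments for both are sound.
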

By a construction of limiting manifolds, we show now that (\ref{gl-02}) is 
also a sharp estimate. The following theorem gives a certain construction
principle.
\begin{thm} \label{thm-3-6}
If $M_1$ and $M_2$ are limiting manifolds for the estimate (\ref{gl-01}) of
odd complex dimensions $m_1$ and $m_2$, respectively, such that its Ricci
tensors have the same positive eigenvalue, then the product $M_1 \times 
M_2$ is a limiting manifold of the estimate (\ref{gl-02}).
\end{thm}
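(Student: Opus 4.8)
The plan is to combine Theorem~\ref{thm-3-5} (the characterization of limiting manifolds of (\ref{gl-02}) by the existence of an antiholomorphic section of $\Sigma_{\frac{m-2}{2}}$, equivalently a holomorphic section of $\Sigma_{\frac{m+2}{2}}$) with Theorem~\ref{thm-2-7}(i), which tells us very concretely what $M_1$ and $M_2$ look like. First I would set $m = m_1 + m_2$; since $m_1, m_2$ are odd, $m$ is even, and $M := M_1 \times M_2$ is a compact K\"ahler manifold of even complex dimension $m$. The Einstein condition on $M$ is exactly the hypothesis that the Ricci eigenvalues of $M_1$ and $M_2$ coincide: since each factor is K\"ahler-Einstein with $\Ric = c\,\mathrm{id}$ for the common constant $c>0$, the product has $\Ric = c\,\mathrm{id}$, so $M$ is spin K\"ahler-Einstein with positive scalar curvature $S = 2mc$. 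Thus Corollary~\ref{cor-3-2} and Theorem~\ref{thm-3-5} apply to $M$, and it remains to exhibit an antiholomorphic section of $\Sigma_{\frac{m-2}{2}}(M)$.

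The key structural input is that the spinor bundle of a product splits as $\Sigma(M) \cong \Sigma(M_1) \otimes \Sigma(M_2)$, compatibly with the $\Z$-gradings: $\Sigma_p(M_1) \otimes \Sigma_q(M_2)$ sits inside $\Sigma_{p+q}(M)$, because the K\"ahler form of $M$ is $\Omega = \Omega_1 + \Omega_2$ and its eigenvalue $i(2(p+q)-m)$ on $\Sigma_p(M_1)\otimes\Sigma_q(M_2)$ is the sum $i(2p-m_1) + i(2q-m_2)$ of the factor eigenvalues. Moreover $\nabla = \nabla^1 \otimes 1 + 1 \otimes \nabla^2$ and correspondingly $\nabla^{1,0}$ on $M$ decomposes along the two factors, so a tensor product $\varphi_1 \otimes \varphi_2$ is antiholomorphic on $M$ iff each $\varphi_i$ is antiholomorphic on $M_i$. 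Hence I need antiholomorphic sections $\varphi_i \in \Gamma(\Sigma_{k_i-1}(M_i))$ with $(k_1-1)+(k_2-1) = \frac{m-2}{2} = \frac{m_1+m_2-2}{2}$, i.e. $k_i - 1 = \frac{m_i-1}{2}$, so $k_i = \frac{m_i+1}{2}$: exactly the top type. But since $M_i$ is a limiting manifold of (\ref{gl-01}) in odd dimension $m_i$, Remark~\ref{rem-2-6} and the surrounding discussion say it carries a K\"ahlerian Killing spinor $\psi = \psi_{k_i-1} + \psi_{k_i}$, whose component $\psi_{k_i}\in\Gamma(\Sigma_{k_i})$ is holomorphic by (\ref{gl-48}); applying $j$ and using (\ref{gl-16}), (\ref{gl-18}) gives a \emph{antiholomorphic} section of $\Sigma_{m_i-k_i} = \Sigma_{k_i-1}$ on $M_i$. (Equivalently one reads off $\psi_{k_i-1}$ directly, which is antiholomorphic by the analogue of (\ref{gl-44}) in the Killing case.) Tensoring $\varphi_1\otimes\varphi_2$ then yields a nonzero antiholomorphic section of $\Sigma_{\frac{m-2}{2}}(M)$, and Theorem~\ref{thm-3-5} concludes that $M$ is a limiting manifold of (\ref{gl-02}).

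The main obstacle is not the spinor-bundle bookkeeping — that is the standard product formula — but making sure the degree arithmetic lands precisely on $\frac{m-2}{2}$ and that the object produced is genuinely antiholomorphic in the sense of $\nabla^{1,0}\varphi = 0$, not merely annihilated by $D_-$; this is where one must be careful to use the full strength of the K\"ahlerian Killing equation (\ref{gl-46})--(\ref{gl-48}) on each factor rather than just the weaker twistor equation, and where invoking Theorem~\ref{thm-2-7}(i) to guarantee existence of such Killing spinors on $M_1$ and $M_2$ is essential. A secondary point worth a line is the spin condition: a product of two spin manifolds is spin, and the spin structure is the tensor product one, so there is no obstruction there. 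I would also remark that the construction in fact shows the first eigenvalue of $D$ on $M_1\times M_2$ equals $\sqrt{\frac{m+2}{4m}S}$, since the antiholomorphic section of $\Sigma_{\frac{m-2}{2}}$ forces equality in (\ref{gl-52}) for type $\frac{m}{2}$, and types are bounded below by the right-hand side of (\ref{gl-37}) which is larger for the lower types $k < \frac{m}{2}$ — so no smaller eigenvalue can occur.
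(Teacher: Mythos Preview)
Your proposal is correct and follows essentially the same route as the paper: check that $M_1 \times M_2$ is K\"ahler--Einstein of even complex dimension, use the tensor-product splitting $\Sigma = \Sigma^1 \otimes \Sigma^2$ with $\Sigma^1_p \otimes \Sigma^2_q \subset \Sigma_{p+q}$ and $\nabla = \nabla^1 \otimes 1 + 1 \otimes \nabla^2$, tensor antiholomorphic spinors on the factors to obtain an antiholomorphic section of $\Sigma_{\frac{m-2}{2}}$, and conclude via Theorem~\ref{thm-3-5}. The paper obtains the antiholomorphic $\psi_i \in \Gamma(\Sigma^i_{(m_i-1)/2})$ directly from Theorem~\ref{thm-3-5} applied to each factor (the odd-dimensional limiting case of (\ref{gl-01}) is exactly the case $k=\frac{m_i+1}{2}$ of (\ref{gl-52})) rather than via K\"ahlerian Killing spinors, and it never invokes Theorem~\ref{thm-2-7}(i): the classification is irrelevant here, since existence of the required antiholomorphic spinor on $M_i$ is precisely the content of the limiting property itself.
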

\begin{proof}
By supposition, $M_1$ and $M_2$ are compact K\"ahler-Einstein manifolds with 
positive scalar curvatures $S_1$ and $S_2$, respectively, such that 
$S_1 /2m_1 = S_2 / 2m_2$. Hence, the product $M: = M_1 \times M_2$ is also
a compact K\"ahler-Einstein of even complex dimension $m:= m_1 + m_2$ with
scalar curvature $S:= S_1 + S_2$. Now, we remember the following general
fact.If
\begin{displaymath}
\Sigma^1 = \bigoplus\limits^{m_1}_{k=0} \, \Sigma^1_k \quad , \quad 
\Sigma^2 = \bigoplus\limits^{m_2}_{l=0} \, \Sigma^2_l
\end{displaymath}
are the spinor bundles of $M_1$ and $M_2$, respectively, then the spinor bundle 
$\Sigma$ of $M$ is of the form
\begin{equation}\label{gl-54}
\Sigma = \Sigma^1 \otimes \Sigma^2 = \bigoplus\limits^m_{k=0} \Sigma_k \ , 
\end{equation}
where the subbundle $\Sigma_k$ is given by
\begin{equation}\label{gl-55}
\Sigma_k = \bigoplus\limits^k_{l=0} \left( \Sigma^1_l \otimes \Sigma^2_{k-l}
\right) \ . 
\end{equation}
Moreover, if $\nabla^1, \nabla^2$ and $\nabla$ denote the covariant derivatives 
on $\Sigma^1, \Sigma^2$ and $\Sigma$, respectively, then $\nabla_X$ acts on the 
tensor product $\varphi \otimes \psi \in \Gamma (\Sigma)$ of spinors
$\varphi \in \Gamma (\Sigma^1)$ and $\psi \in \Gamma (\Sigma^2)$ by
\begin{equation}\label{gl-56}
\nabla_X (\varphi \otimes \psi )=( \nabla^1_{X_1} \varphi ) \otimes \psi +
\varphi \otimes (\nabla^2_{X_2} \psi ) \ , 
\end{equation}
where $X=X_1 + X_2$ is the orthogonal decomposition of the vector field
 $X$ according to the canonical splitting of the tangent bundle
\begin{equation}\label{gl-57}
TM = TM_1 \oplus TM_2 \ . 
\end{equation}
Since $M_1$ and $M_2$ are limiting manifolds for the estimate (\ref{gl-01}), 
there exist antiholomorphic spinors $\psi_1 \in \Gamma (\Sigma^1_{\frac{m_1 -1}{2}})$
and $\psi_2 \in \Gamma (\Sigma^2_{\frac{m_2 -1}{2}})$ according to Theorem 
\ref{thm-3-5}. By (\ref{gl-56}), we see that the spinor $\psi_1 \otimes \psi_2
\in \Gamma (\Sigma^1_{\frac{m_1 -1}{2}}) \otimes \Sigma^2_{\frac{m_2 -1}{2}})
\subseteq \Gamma (\Sigma_{\frac{m-2}{2}})$ is antiholomorphic too. Hence, by Theorem
\ref{thm-3-5}, $M$ is a limiting manifold for the estimate (\ref{gl-02}).
\end{proof}
Using the results of Section 2 and Theorem \ref{thm-3-6} we obtain the 
following corollary which lists some special examples of limiting manifolds.
\begin{cor} \label{cor-3-7}
For all $k,l \in \N \cup \{ 0 \}$, the K\"ahler-Einstein manifolds
$\C P^{4k+1} \times \C P^{4l+1}, \C P^3 \times \C P^{4k+1}$ and 
$F(\C^3) \times \C P^{4k+1}$ are limiting  manifolds of the estimate
(\ref{gl-02}). Moreover, $\C P^3 \times F(\C^3)$ is a limiting manifold.
\end{cor}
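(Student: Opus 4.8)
The plan is to derive the Corollary purely from Theorem \ref{thm-3-6}, using Theorem \ref{thm-2-7} to supply the odd-dimensional factors and a homothetic rescaling of the metrics to meet the Ricci-eigenvalue hypothesis of Theorem \ref{thm-3-6}.

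First I would record which manifolds enter as factors. By Theorem \ref{thm-2-7}(i), for every $k \in \N \cup \{0\}$ the space $\C P^{4k+1}$ is a limiting manifold for (\ref{gl-01}) in the odd complex dimension $4k+1$. In complex dimension $3 = 4\cdot 0 + 3$, Theorem \ref{thm-2-7}(i) describes the limiting manifolds of (\ref{gl-01}) as the twistor spaces over quaternionic K\"ahler manifolds of positive scalar curvature; the two four-dimensional base spaces of this kind are $S^4$ and $\C P^2$, with twistor spaces $\C P^3$ and $F(\C^3)$ respectively, so both $\C P^3$ and $F(\C^3)$ are limiting manifolds of (\ref{gl-01}) for $m = 3$ (this case was also recalled explicitly before Theorem \ref{thm-2-7}).

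Next I would note that being a limiting manifold for (\ref{gl-01}) or for (\ref{gl-02}) is invariant under a homothety $g \mapsto c\,g$ with $c > 0$: then $S \mapsto c^{-1} S$ and each Dirac eigenvalue satisfies $\lambda^2 \mapsto c^{-1}\lambda^2$, so an equality in (\ref{gl-01}) or (\ref{gl-02}) for the first eigenvalue is preserved, as is the K\"ahler-Einstein condition. Given two of the factors above, say $M_1, M_2$ with complex dimensions $m_1, m_2$ and scalar curvatures $S_1, S_2$, I would rescale the metric on $M_2$ by $c := S_2 m_1 / (S_1 m_2) > 0$; afterwards $\Ric_i = \frac{S_i}{2m_i}\,\mathrm{id}$ with $S_1/(2m_1) = S_2/(2m_2)$, so the Ricci tensors of $M_1$ and $M_2$ have the same positive eigenvalue and Theorem \ref{thm-3-6} applies, showing that $M_1 \times M_2$ is a limiting manifold of (\ref{gl-02}).

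Finally I would run the four families through this machine, taking $(M_1, M_2)$ to be $(\C P^{4k+1}, \C P^{4l+1})$, $(\C P^3, \C P^{4k+1})$, $(F(\C^3), \C P^{4k+1})$, and $(\C P^3, F(\C^3))$. In each case both factors have odd complex dimension and are limiting manifolds of (\ref{gl-01}) by the first step, so after the rescaling of the second step Theorem \ref{thm-3-6} yields the assertion; the resulting complex dimensions $4(k+l)+2$, $4k+4$, $4k+4$ and $6$ are all even, as they must be for (\ref{gl-02}). The only work beyond invoking the earlier results is the elementary bookkeeping in the first two steps --- confirming that $\C P^3$ and $F(\C^3)$ occur as the twistor spaces named in Theorem \ref{thm-2-7}(i), and that a homothety leaves the limiting property intact --- so there is no genuine difficulty to overcome here.
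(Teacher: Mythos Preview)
Your proposal is correct and follows exactly the route the paper intends: invoke Theorem~\ref{thm-2-7} (and the remarks preceding it) to identify $\C P^{4k+1}$, $\C P^3$ and $F(\C^3)$ as odd-dimensional limiting manifolds for (\ref{gl-01}), then feed the pairs into Theorem~\ref{thm-3-6}. The homothety step you include to equalize the Ricci eigenvalues is implicit in the paper's formulation (the products are declared ``K\"ahler-Einstein''), so you have simply made explicit what the paper leaves to the reader.
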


\vspace{1cm}



\begin{thebibliography}{xxxx}
\bibitem {1} Th.~Friedrich,
{\em Der erste Eigenwert des Dirac-Operators einer kompakten Riemannschen Mannigfaltigkeit
nichtnegativer Skalarkr\"ummung}, 
Math. Nachr. 97 (1980), 117--146.  

\bibitem {2} Th.~Friedrich, 
{\em The classification of $4$-dimensional K\"ahler manifolds with small eigenvalue
of the Dirac operator}, 
Math. Ann. 295 (1993), No. 3, 565--574.

\bibitem {3} P.~Gauduchon, 
{\em L'Op\'{e}rateur de Penrose k\"ahlerien et les in\'{e}galit\'{e}s de Kirchberg}, 
preprint, 1993.

\bibitem {4} O.~Hijazi,
{\em Eigenvalues of the Dirac operator on compact K\"ahler manifolds}, 
Comm. Math. Phys. 160 (1994), 563--579.

\bibitem {5} K.-D.~Kirchberg, 
{\em The first eigenvalue of the Dirac operator on K\"ahler manifolds}, 
J. Geom. Phys. 7 (1990), 449--468.

\bibitem {6} K.-D.~Kirchberg, 
{\em Compact six-dimensional K\"ahler spin manifolds of positive scalar curvature with the smal\-lest
possible first eigenvalue of the Dirac operator}, 
Math. Ann. 282 (1988), No. 1, 157--176.

\bibitem {7} K.-D.~Kirchberg, 
{\em Holomorphic spinors and the Dirac equation}, 
Ann. Glob. Anal. Geom. 17 (1999), 97--111.

\bibitem {8} K.-D.~Kirchberg, {\em Killing spinors on K\"ahler manifolds},
Ann. Glob. Anal. Geom. 11 (1993), 141-164.

\bibitem {9} A.~Moroianu, {\em La premi\`{e}re valeur propre de l'op\'{e}rateur de Dirac sur
les vari\'{e}t\'{e}s k\"ahl\'{e}riennes compactes}, Comm. Math. Phys. 169 (1995), 373--384.

\bibitem {10} A.~Moroianu, {\em K\"ahler manifolds with small eigenvalues of the Dirac operator and a
conjecture of Lichnero\-wicz}, Ann. Inst. Fourier. 49 (1999), 1637--1659.

\bibitem {11} M.~Pilca, {\em K\"ahlerian twistor spinors}, preprint (2009).


\end{thebibliography}
\end{document}